\author{Beno\^{\i}t Kloeckner}
\title[Circle expanding maps]{Optimal transport and dynamics of 
  expanding circle maps acting on measures}
\newcommand{\measures}{\mathscr{P}(\mathbb{S}^1)}
\newcommand{\Id}{\mathrm{Id}}
\newcommand{\wassd}{\mathop \mathrm{W}\nolimits}
\newcommand{\mdim}{\mathop \mathrm{mdim}_M\nolimits}
\newcommand{\dd}{\mathrm{d}}
\newcommand{\smallsum}{{\textstyle \sum}}
\renewcommand{\smallint}{{\textstyle\int}}
\newtheorem{ques}{Question}
\begin{document}
%%%%%%%%%%%%%%%%%%%%%%%%%%%%%%%%%%%%%%%%%%%%%%%%%%%%%%%%%%%%%%%%%%%%%%%%%%%%%%%%
%%%%%%%%%%%%%%%%%%%%%%%%%%%%%%%%%%%%%%%%%%%%%%%%%%%%%%%%%%%%%%%%%%%%%%%%%%%%%%%%
%%%%%%%%%%%%%%%%%%%%%%%%%%%%%%%%%%%%%%%%%%%%%%%%%%%%%%%%%%%%%%%%%%%%%%%%%%%%%%%%

\begin{abstract}
In this article we compute the derivative of the action on probability measures
of an expanding circle map at its absolutely continuous invariant measure.
The derivative is defined using optimal transport: we use the rigorous framework
set up by N. Gigli to endow the space of measures with a kind of differential structure.

It turns out that $1$ is an eigenvalue of infinite
multiplicity of this derivative, and we deduce that the absolutely continuous invariant measure can be deformed in many ways into atomless, nearly invariant measures. As a consequence, we obtain
counter-examples to an infinitesimal version of Furstenberg's conjecture.

We also show that the action of standard self-covering maps on measures
has positive metric mean dimension. 
\end{abstract}

\thanks{This work was supported by the Agence Nationale de la Recherche, grant ANR-11-JS01-0011.}

\maketitle

Some time after the publication of a first version of this article
\cite{Original}, I found an application of the results to an infinitesimal 
version of Furstenberg's conjecture, as well as an error
in a Lemma (which could be corrected without affecting the main 
results). Both are to be published together in \textit{Ergodic Theory
and Dynamical system}, and the present article is a consolidated
version which combines the original article with the error corrected,
and the additional material.

%%%%%%%%%%%%%%%%%%%%%%%%%%%%%%%%%%%%%%%%%%%%%%%%%%%%%%%%%%%%%%%%%%%%%%%%%%%%%%
%%%%%%%%%%%%%%%%%%%%%%%%%%%%%%%%%%%%%%%%%%%%%%%%%%%%%%%%%%%%%%%%%%%%%%%%%%%%%%
\section{Introduction}

The theory of optimal transport has drawn much attention in recent years.
Its applications to geometry and PDEs have in particular been largely 
disseminated. In this paper, we would like to show its effectiveness in a 
dynamical context. We are interested in arguably the simplest dynamical
system where the action on measures is significantly different from the action 
on points, namely expanding circle maps.

Another goal of the paper is to examplify the rigorous differential structure
defined by N. Gigli \cite{Gigli}, for the simplest possible compact manifold.
Note that one can use absolutely continuous curves to define the almost 
everywhere differentiability of maps, see in particular
\cite{Gigli2} where this method is applied to the exponential map.
Other previous uses of variants of this manifold structure
include the definition of gradient flows, as in the pioneering \cite{Otto} and in 
\cite{Ambrosio-Gigli-Savare}, and of curvature, as in \cite{Lott}.
But to our knowledge, no example of explicit derivative of a measure-defined
map at a given point had been computed before.

%%%%%%%%%%%%%%%%%%%%%%%%%%%%%%%%%%%%%%%%%%%%%%%%%%%%%%%%%%%%%%%%%%%%%%%%%%%%%%%%%
\subsection{An important model example}

Let us first consider the usual degree $d$
self-covering map of the circle $\mathbb{S}^1 = \mathbb{R}/\mathbb{Z}$ defined by
\[\Phi_d(x) = dx \mod 1.\]
It acts on the set $\measures$ of Borel probability measures,
endowed with the topology of weak convergence, by the push-forward map
$\Phi_{d\#}$. 

A map like $\Phi_d$ can act by composition on the right on a
function space (e.g. Sobolev spaces). 
The adjoint of this map is usually called
a Perron-Frobenius operator or a transfer operator, and a great 
deal of effort has been made to understand these
operators, especially their spectral properties (see for example \cite{Baladi}). 
One can consider
$\Phi_{d\#}$ as an analogue for possibly singular measures of the Perron-Frobenius
operator of $\Phi_d$.

As pointed out by the referee of a previous version of this paper, using the
finite-to-one maps 
\[(x_1,\ldots,x_n)\mapsto \frac1n\delta_{x_1}+\dots+\frac1n\delta_{x_n}\]
it is easy to prove that $\Phi_{d\#}$ is topologically transitive and has infinite
topological entropy. To refine this last remark, we shall prove that $\Phi_{d\#}$ 
has positive metric mean dimension (a metric dynamical invariant of infinite-entropy
maps).
\begin{theo}\label{theo:mdim}
For all integers $d\geqslant 2$ and all exponents $p\in[1,+\infty)$ we have 
\[\mdim(\Phi_{d\#},\wassd_p)\geqslant p(d-1)\]
where $\wassd_p$ is the Wasserstein metric with cost $|\cdot|^p$.
\end{theo}
The definition of Wasserstein metrics is given below; for the definiton
of metric mean dimension and the proof of the above result, see 
Section \ref{sec:entropy}.
Except in this result, we shall only use the quadratic Wasserstein metric
($p=2$), which we will denote by $\wassd$.

Our main goal is to study the first-order dynamics of $\Phi_{d\#}$ near the uniform
measure $\lambda$. The precise setting will be exposed latter; let us just give
a few elements. The tangent space $T_\mu$ to $\measures$ at a measure
$\mu$ that is absolutely continuous with a bounded and bounded away from zero
density identifies
with the Hilbert space $L^2_0(\mu)$ of all vector fields 
$v:\mathbb{S}^1\to\mathbb{R}$ that are $L^2$ with respect to $\mu$,
and such that $\int v \,\lambda =0$. More generally,
if $\mu$ is atomless $T_\mu$ identifies with a Hilbert subspace
$L^2_0(\mu)$ of $L^2(\mu)$.

We have a kind of exponential map: $\exp_\mu(v)=\mu+v:=(\Id+v)_\#\mu$. 
Then we say that a map $f$ acting on $\measures$
has G\^ateau derivative $L$ at $\mu$ if $f(\mu)$ has no atom
and $L:L^2_0(\mu)\to L^2_0(f(\mu))$ is a continous linear operator
such that for all $v$ we have
\[\wassd(f(\mu+tv),f(\mu)+tLv)=o(t)\]
We then write $D_\mu f = L$.

Our first differentiability result is the following.
\begin{theo}\label{theo:diff}
The map $\Phi_{d\#}$ has a G\^ateaux derivative at $\lambda$,
equal to $d$ times
the Perron-Frobenius operator of $\Phi_d$ acting on $L^2_0(\lambda)$.
In particular its spectrum is the disc of radius $d$ and all numbers of modulus $<d$ are
eigenvalues with infinite multiplicity.
\end{theo}
This result is detailled as Theorem \ref{theo:differential} and Proposition \ref{prop:spec}
below. We shall also see that $\Phi_{d\#}$ is not Fr\'echet differentiable.

%%%%%%%%%%%%%%%%%%%%%%%%%%%%%%%%%%%%%%%%%%%%%%%%%%%%%%%%%%%%%%%%%%%%%%%%%%%%%%
\subsection{General expanding maps}

The next step is to consider the action on measures of expanding
circle maps. In Section \ref{sec:general}, given a general $C^2$
expanding map $\Phi$, we compute the derivative of $\Phi_\#$ at 
its unique absolutely continuous invariant measure (Theorem \ref{theo:expanding}). 
Instead of writting down the expression here, let us simply state the following.
\begin{theo}\label{theo:generaldiff}
If $\Phi$ is a $C^2$ expanding circle map, $\Phi_\#$ has a G\^ateaux derivative at its
unique invariant absolutely continuous measure $\rho\lambda$, whose adjoint operator 
in  $L^2_0(\rho\lambda)$ is $u\mapsto \Phi'\,u\circ\Phi$.
\end{theo}
In particular this derivative is a multiple of the Perron-Fronenius
operator (on $L^2_0(\rho\lambda)$) only when $\Phi'$ is constant, 
that is when $\Phi$ is a model map. Using general results
in the spectral theory of transfer operator, it is however possible to
prove that $1$ is always an eigenvalue of infinite multiplicity,
with continuous eigenfunctions.

%%%%%%%%%%%%%%%%%%%%%%%%%%%%%%%%%%%%%%%%%%%%%%%%%%%%%%%%%%%%%%%%%%%%%%%%%%%%%%
\subsection{Nearly invariant measures}

The spectral
study of $D_\lambda(\Phi_\#)$ gives us large families of nearly invariant
measures, with Lipschitz para\-metrization.
\begin{theo}\label{theo:almost-invariant}
For all integers $n$, there is a bi-Lipschitz embedding 
$F :  B^n \to \measures$ mapping $0$ to the absolutely continuous
invariant measure $\rho\lambda$ of $\Phi$ such that
\[\wassd\big(\Phi_\#(F(a)),F(a)\big) =o(|a|).\]
As a consequence, for all $\varepsilon>0$ and all integer $K$
there is a radius $r>0$ such that for all $k\leqslant K$ and
all $a\in B^n(0,r)$ the following holds:
\[\wassd\big(\Phi_{d\#}^k(F(a)),F(a)\big)\leqslant \varepsilon |a|.\]
\end{theo}
Here $B^n$ denotes the unit Euclidean ball centered at $0$ and $\wassd$ is the quadratic
Wasserstein distance (whose definition is recalled below).

It is easy to construct invariant measures near the 
absolutely continuous one, for example supported on a union of periodic orbits.
One can also
consider convex sums $(1-a)\rho\lambda+a\mu$ where $\mu$ is any invariant measure
and $a\ll 1$. But note that the curves $a\mapsto (1-a)\rho\lambda+a\mu$ need not
be rectifiable, let alone Lipschitz. Bernoulli measures are also examples;
they are singular, atomless, fully supported invariant measures of $\Phi_d$
that can be arbitrary close to $\lambda$. 

The nearly invariant measures above seem of a different
nature, and a natural question is how regular they are.
They are given by push-forwards of the uniform measure by continuous 
functions; 
for example in the model case a one parameter family is given by
\[\big(\Id+t\sum_{\ell=0}^\infty d^{-\ell}\cos(2\pi d^\ell \cdot)\big)_\#\lambda   \]
where $t\in[0,\varepsilon)$. This makes it easy to prove that almost all
of them are atomless.
\begin{prop}\label{prop:atomless}
If $\mu$ is an atomless measure and $v\in L^2(\mu)$,
 for all but a countable number of values of $t\in[0,1]$, the
measure $\mu+tv=(\Id+tv)_{\#}\mu$ has no atom.

In particular, with the notation of Theorem \ref{theo:almost-invariant},
the map $F$ can be chosen such that
for almost all $a$ the measure $F(a)$ has no atom.
\end{prop}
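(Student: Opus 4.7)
The plan for the first part is a counting argument that stratifies by atom mass. Let $T$ be the set of $t \in [0,1]$ for which $(\Id + tv)_\# \mu$ has at least one atom, and for each integer $n \geq 1$ let $T_n \subseteq T$ consist of those $t$ for which this push-forward carries some atom of mass at least $1/n$. Since $T = \bigcup_{n \geq 1} T_n$, it is enough to prove $\# T_n \leq n$.

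For each $t \in T_n$ I would pick a witnessing atom $y_t$ and consider its preimage $B_t = \{x \in \mathbb{S}^1 : x + tv(x) \equiv y_t \pmod{1}\}$, so that $\mu(B_t) \geq 1/n$. The core observation is that for distinct $s, t \in T_n$ the intersection $B_s \cap B_t$ is at most countable: any $x$ in this intersection lifts to real equations $x + sv(x) = y_s + k_s$ and $x + tv(x) = y_t + k_t$ for some integers $k_s, k_t$, and seen as a $2 \times 2$ linear system in $(x, v(x))$ its determinant $t - s$ is nonzero, so $x$ is uniquely determined by the pair $(k_s, k_t) \in \mathbb{Z}^2$. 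Since $\mu$ has no atoms, $\mu(B_s \cap B_t) = 0$, the family $\{B_t\}_{t \in T_n}$ is essentially pairwise disjoint in $\mu$, and $\sum_{t \in T_n} \mu(B_t) \leq 1$ forces $\# T_n \leq n$.

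For the consequence about the map $F$ from Theorem~\ref{theo:almost-invariant}, I would use the form provided by its construction, $F(a) = (\Id + w_a)_\#(\rho\lambda)$, with $a \mapsto w_a$ a linear map into $L^2(\rho\lambda)$ built from eigenvectors of the G\^ateaux derivative. Restricting to a radial ray $t \mapsto F(t\theta)$ with $\theta \in S^{n-1}$ reduces to the setting of the first part, taking $\mu = \rho\lambda$ (which is atomless, being absolutely continuous). The first part then shows that the bad set of $t \in [0,1]$ is countable, hence Lebesgue-negligible on every such ray; measurability of the exceptional subset of $B^n$ follows from writing ``no atom of mass $\geq 1/n$'' as a measurable condition and intersecting, after which Fubini in polar coordinates yields the claim.

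The only subtle point is the mod-$1$ ambiguity on the circle, which prevents one from passing directly from ``$y_t$ is an atom'' to a single affine equation in $x$. The argument absorbs this by letting the integer lifts range over $\mathbb{Z}$ (respectively $\mathbb{Z}^2$) in the linear-system computation; the intersection $B_s \cap B_t$ remains countable, which is all one needs once atomlessness of $\mu$ has been invoked.
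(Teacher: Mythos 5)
Your argument is correct and is essentially the paper's proof in different clothing: the paper pushes $\mu$ to the graph measure $\Gamma=(\Id,v)_\#\mu$ on $T\mathbb{S}^1$ and notes that two lines $x+ty=s$, $x+sy=s'$ with $t\neq s$ meet in a countable, hence $\Gamma$-null, set --- which is exactly your determinant computation for $B_s\cap B_t$ --- followed by the same pigeonhole on masses $\geqslant 1/n$. Your radial reduction plus Fubini for the statement about $F(a)$ is the intended (and in the paper implicit) deduction, so nothing is missing.
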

That the first part of this result implies the second part shall
become clear during the proof of Theorem \ref{theo:almost-invariant}
in Section \ref{sec:nearly-invariant}, where we construct $F$.

This leaves open the following, antagonist questions.
\begin{ques}
Is the measure $F(a)$ absolutely continuous for most, or at least some $a\neq 0$?
\end{ques}

\begin{ques}
Is the measure $F(a)$ invariant for most, or at least some $a\neq 0$?
\end{ques}

The next natural questions, not adressed at all here, concerns the 
dynamical properties of the action
on measures of higher dimensional hyperbolic dynamical systems like Anosov
maps or flows, or of discontinuous systems like interval exchange maps.

%%%%%%%%%%%%%%%%%%%%%%%%%%%%%%%%%%%%%%%%%%%%%%%%%%%%%%%%%%%%%%%%%%%%%%%%%%%%%%%%
\subsection{An infinitesimal version of Furstenberg's conjecture}

While circle expanding maps have in many respects become toy-models
in the category of hyperbolic dynamical systems, a prominent question
concerning them is still open for over half a century.
\begin{conj}[Furstenberg]
If an atomless probability measure $\mu$ on the circle $\mathbb{S}^1=\mathbb{R}/\mathbb{Z}$
is invariant under both
\[\Phi_2:x\mapsto 2x\mod 1 \qquad\mbox{and}\qquad \Phi_3:x\mapsto 3x\mod 1 \]
then $\mu$ is equal to the Lebesgue measure $\lambda$.
\end{conj}
In the above conjecture, one can replace $2$ and $3$ by two multiplicatively
independent integers. Even the above case is wide open in general, though
a theorem of Rudolph asserts that Furstenberg's
conjecture holds for measures $\mu$ having positive entropy for one of the maps
$\Phi_2$ or $\Phi_3$ \cite{Rudolph} (see also \cite{Johnson}). Many other results related to this question can be found in the literature, among which \cite{Hochman-Shmerkin,BLMV}; the 
interested reader can for example use the answers to the 
MathOverflow question \cite{MO} as pointers.

To see how relevant our results
can be in the context of Furstenberg conjecture, let us consider how one
can approach this kind of problem in a differential geometric setting.

Furstenberg's conjecture is a strong rigidity statement; in differential geometry,
a common strategy to attack such questions is to aim to weaker rigidity statements.
A first weakening would be to ask whether the point known to have a given property of 
interest  (here: $\lambda$) is, rather than unique, at least isolated among points with 
this property? If this stays out of reach, then can we prove that it is not possible
to deform this point, i.e. to find a non-constant continuous path starting at this point
inside the set defined by the given property? A further weakening is to ask for
first-order rigidity, i.e. to ask whether we can use the tangent space and derivatives
to prove that no $C^1$ deformation can exist in the considered set. In the case of Furstenberg
conjecture, we have a space $\mathcal{P}(\mathbb{S}^1)$ and two rather rich subspaces,
the sets of atomless invariant measures for $\Phi_2$ and $\Phi_3$. Let us denote these 
sets of fixed measures
by $I_2$ and $I_3$; then the conjecture is that
$I_2\cap I_3=\lambda$.
 Imagine for a moment that $I_2$ and $I_3$ are some sort of differentiable submanifolds of
$\mathcal{P}(\mathbb{S}^1)$; then the various above weakenings of Furstenberg's conjecture
would take the form of the following questions: 
\begin{enumerate}
\item Is $\lambda$ isolated in
$I_2\cap I_3$? 
\item Is $\lambda$ the sole point in its path-connected component inside
$I_2\cap I_3$?
\item Must a $C^1$ curve starting at $\lambda$ and lying inside
  $I_2\cap I_3$ be constant?
\end{enumerate}

Finally, to prove a positive answer to this third weakening,
the most common approach would be to prove that the
intersection $I_2\cap I_3$ is ``first-order rigid''
at $\lambda$, in the sense
that the tangent spaces $T_\lambda I_2$ and $T_\lambda I_3$ intersect trivially.

Since $I_2$ and $I_3$ are defined (if we forget momentarily the atomless condition)
as sets of fixed points fo $\Phi_{2\#}$ and $\Phi_{3\#}$, the first-order rigidity question
would reduce to ask whether the spaces $E_2,E_3 \subset T_\lambda \mathcal{P}(\mathbb{S}^1)$
of invariant vectors for the derivatives
$D_\lambda\Phi_{2\#}$ and $D_\lambda\Phi_{3\#}$ intersect trivially. Even if all the above 
speculation turns out to be wrong (e.g. $I_2$ and $I_3$ could not be 
anything close to submanifolds), this last question is perfectly defined in the
differential setting alluded to above, and can be considered an infinitesimal
version of Furstenberg's conjecture. 
As a consequence of the previous results, we will prove that this
question as a negative answer.

\begin{theo}\label{theo:main}
The vector space $E_2\cap E_3\subset T_\lambda \mathcal{P}(\mathbb{S}^1)$
of tangent vectors at $\lambda$ that are simultaneously invariant under both
$D_\lambda \Phi_{2\#}$ and $D_\lambda \Phi_{3\#}$ is infinite-dimensional.

The vector space $\bigcap_{d=2}^\infty E_d$ of tangent vectors
at $\lambda$ that are simultaneously invariant under all
the $D_\lambda \Phi_{d\#}$ is $2$-dimensional.
\end{theo}

Formulated as it is in terms of the Wasserstein metric,
this result could feel very abstract and potentially artificial, so let us give a
direct corollary that contains no reference to optimal transport or abstract
differential geometric setting. The idea behind this corollary goes back
to an insight of Otto \cite{Otto} related to the point of
view of Benamou and Brenier \cite{Benamou-Brenier} and 
developed
in \cite{Ambrosio-Gigli-Savare}:
by integration, smooth test functions can serve as a kind of (weak) coordinates on
$\mathcal{P}(\mathbb{S}^1)$; for simplicity this corollary is phrased 
in a restricted way, only using that  $\bigcap_{d=2}^\infty E_d$ is not reduced to $0$.

\begin{coro}\label{coro:main}
There exists a path of probability measures
 $(\mu_t)_{t\in(-\varepsilon,\varepsilon)}$
with $\mu_0=\lambda$, continuous in the weak topology, 
with $\mu_t$ atomless for almost all $t$, such that:
\[\frac{\dd}{\dd t} \int_{\mathbb{S}^1} \psi_0 \,\dd\mu_t \Big\rvert_{t=0} \neq 0\]
for some smooth function $\psi_0:\mathbb{S}^1\to\mathbb{R}$, and
\[\frac{\dd}{\dd t}\int_{\mathbb{S}^1} \psi \,\dd\mu_t \Big\rvert_{t=0}
  = \frac{\dd}{\dd t}\int_{\mathbb{S}^1} \psi \,\dd \big(\Phi_{d\#}\mu_t\big) 
     \Big\rvert_{t=0}\]
for all smooth functions $\psi:\mathbb{S}^1\to\mathbb{R}$ and all integer $d\ge2$.
\end{coro}

\begin{rema}\begin{enumerate}
\item The first condition ensures that $\mu_t$ depends significantly on $t$ (in particular,
it avoids the degenerate and obvious choice $\mu_t\equiv\lambda$), while
the second condition expresses that for small $t$, $\mu_t$ is ``almost invariant''
under all the push-forward maps $\Phi_{d\#}$. Of course, this condition can be rewritten
\[\frac{\dd}{\dd t}\int_{\mathbb{S}^1} \psi \,\dd\mu_t \Big\rvert_{t=0}
  = \frac{\dd}{\dd t}\int_{\mathbb{S}^1} \psi\circ\Phi_d \,\dd \mu_t \Big\rvert_{t=0}.\]
\item This corollary is intrinsically much weaker than the theorem, as 
  differentiability in the sense of Wasserstein distance implies
  differentiability of the integrals of test functions, but the converse
  implication does not hold. For example, a curve of the form
  $(t\mu + (1-t)\nu)_t$ is usually not differentiable (or even rectifiable)
  in the differential structure induced by $\wassd_2$, while
  the integral of any test function depends affinely on $t$. Nevertheless,
  I do not know a simpler way to get Corollary \ref{coro:main} even when
  restricting $d$ to $\{2,3\}$. Even if the
  Furstenberg conjecture where false and there where an atomless probability measure
  $\mu\neq \lambda$ invariant by $\Phi_2$ and $\Phi_3$, the curve
  $(t\lambda+(1-t)\mu)_t$ would not work as these measures are not positive
  for negative $t$.
\item One could try to extend this infinitesimal argument to the construction
  of families of counter-examples to the Furstenberg conjecture: if
  one of the invariant vectors we found could be extended to a
  vector field preserved by both $\Phi_2$ and $\Phi_3$,
  then the integral curve issued from $\lambda$ would be entirely
  made of invariant measures for both $\Phi_2$ and $\Phi_3$.
  However, it would be incredibly bold to conjecture this extension
  to be possible: we do not even know whether $\Phi_{2\#}$ is
  differentiable at any non-absolutely continuous measure.
  Note also
  that this extension cannot be expected at all for the full semi-group 
  $\mathbb{N}$, as it is known that
  the Lebesgue measure is the only atomless measure invariant under all
  $\Phi_d$ (this holds more generally for large enough sub-semigroups of $\mathbb{N}$,
  see \cite{Einsiedler-Fish}). One can still dream
  of making this approach work for 
  finitely generated multiplicative sub-semigroups, as this
  case is very different from larger sub-semigroups:
  in the former case, the remainder in
  the first-order Taylor formula for the $\Phi_{d\#}$ at 
  $\lambda$ can be made 
  uniform over the generators (for a fixed simultaneously 
  invariant tangent vector).
\end{enumerate}\end{rema}

%%%%%%%%%%%%%%%%%%%%%%%%%%%%%%%%%%%%%%%%%%%%%%%%%%%%%%%%%%%%%%%%%%%%%%%%%%%%%%%%%%%%%%%%%%
\subsection{Recalls and notations}

The most convenient point of view here is to construct the circle as
the quotient $\mathbb{R}/\mathbb{Z}$. We shall often and without notice write
a real number $x\in[0,1)$ to mean its image by the canonical projection. We proceed
similarly for intervals of length less than $1$.

Recall that the push-forward of a measure is defined by 
$\Phi_\#\mu(A)=\mu(\Phi^{-1}A)$ for all Borel sets $A$.

For a detailled introduction on optimal transport, the interested reader can for
example consult \cite{Villani}. Let us give an overview of the properties we shall need.
Given an exponent $p\in[1,\infty)$, if $(X,d)$ is a general metric space, assumed to be polish (complete 
separable) to avoid mesurability issues and endowed with its Borel 
$\sigma$-algebra, its $L^p$ \emph{Wasserstein space}  is
the set $\mathscr{W}_p(X)$ of probability measures $\mu$ on $X$ whose $p$-th moment is finite:
\[\int d^p(x_0,x) \,\mu(dx)<\infty\qquad\mbox{ for some, hence all }x_0\in X\]
endowed with the following metric: given $\mu,\nu\in\mathscr{W}_p(X)$ one sets
\[\wassd_p(\mu,\nu)=\left(\inf_\Pi \int_{X\times X} d^p(x,y)\, 
  \Pi(dx dy)\right)^{1/p}\]
where the infimum is over all probability measures $\Pi$ on $X\times X$
that projects to $\mu$ on the first factor and to $\nu$ on the second one.
Such a measure is called a transport plan between $\mu$ and $\nu$, and is
said to be optimal when it achieves the infimum. In this setting, an optimal
transport plan always exists. Note that when $X$ is compact, the set $\mathscr{W}_p(X)$
is equal to the set $\mathscr{P}(X)$ of all probability measures on $X$.

The name ``transport plan'' is suggestive: it is a way to describe what amount of
mass is transported from one region to another.

The function $\wassd_p$ is a metric, called the ($L^p$) Wasserstein metric, 
and when $X$ is compact it induces the weak topology. We sometimes
denote $\wassd_2$ simply by $\wassd$.

%%%%%%%%%%%%%%%%%%%%%%%%%%%%%%%%%%%%%%%%%%%%%%%%%%%%%%%%%%%%%%%%%%%%%%%%%%%%%%%%
%%%%%%%%%%%%%%%%%%%%%%%%%%%%%%%%%%%%%%%%%%%%%%%%%%%%%%%%%%%%%%%%%%%%%%%%%%%%%%%%
%%%%%%%%%%%%%%%%%%%%%%%%%%%%%%%%%%%%%%%%%%%%%%%%%%%%%%%%%%%%%%%%%%%%%%%%%%%%%%%%
\section{Metric mean dimension}\label{sec:entropy}

Metric mean dimension is a metric invariant of dynamical systems introduced by
Lindenstrauss and Weiss \cite{Lindenstrauss-Weiss}, that refines topological entropy
for infinite-entropy systems.

Let us briefly recall the definitions. Given a
map $f:X\to X$ acting on a compact metric space, for any
$n\in\mathbb{N}$ one defines a new metric on $X$ by
\[d_n(x,y):= \max\{d(f^k(x),f^k(y));0\leqslant k\leqslant n\}.\]
Given $\varepsilon>0$, one says that a subset $S$ of $X$ is
$(n,\varepsilon)$-separated if $d_n(x,y)\geqslant \varepsilon$ whenever
$x\neq y\in S$. Denoting by $N(f,\varepsilon,n)$ the maximal size of a 
$(n,\varepsilon)$-separated set, the topological entropy of $f$ is defined as
\[h(f) := \lim_{\varepsilon\to 0} \limsup_{n\to+\infty} 
\frac{\log N(f,\varepsilon,n)}{n}.\]
Note that this limit exists since $\limsup_{n\to+\infty} \frac1n \log N(f,\varepsilon,n)$
is nonincreasing in $\varepsilon$.
The adjective ``topological'' is relevant since $h(f)$ does not depend upon the
distance on $X$, but only on the topology it defines.
The topological entropy is in some sense a global measure of the dependance on initial condition
of the considered dynamical system. 
The map $\Phi_d$ is a classical example, whose topological entropy is $\log d$.

Now, the metric mean dimension is
\[\mdim(f,d) := \liminf_{\varepsilon\to 0} \limsup_{n\to+\infty} 
  \frac{\log N(f,\varepsilon,n)}{n|\log\varepsilon|}.\]
It is zero as soon as topological entropy is finite. Note that this quantity
does depend upon the metric; here we shall use $\wassd_p$.
Lindenstrauss and Weiss define the metric mean dimension using
covering sets rather than separated sets, but this does not matter since
their sizes are comparable.

Let us prove Theorem \ref{theo:mdim}:
the metric mean dimension of $\Phi_{d\#}$ is at least $p(d-1)$ when
$\measures$ is endowed with the $W_p$ metric.
In another paper \cite{Kloeckner2}, we prove the same kind of result,
replacing $\Phi_d$ by any map having positive entropy. However
Theorem \ref{theo:mdim} has a better constant and its proof is simpler.

\begin{proof}[Proof of Theorem \ref{theo:mdim}]
To construct
a large $(n,\varepsilon)$-separated set, we proceed as follows: we start with the point
$\delta_0$, and choose an $\varepsilon$-separated set of its antecedents. Then we inductively
choose $\varepsilon$-separated sets of antecedents of each elements of the set 
previously defined.
Doing this, we need not control the distance between antecedents of two different elements.
 
Let $k\gg 1$ and $\alpha>0$ be integers; $\varepsilon$ will be exponential in $-k$. Let
$A_k$ be the set all $\mu\in\measures$ such that $\mu((1-2^{-k},1))=0$
and $\mu([0,1/d])\geqslant 1/2$. These conditions are designed to bound from
below the distances between the antecedents to be constructed: a given amount 
of mass (second condition) will have to travel a given distance (first
condition).

An element $\mu\in A_k$ decomposes as $\mu=\mu_h+\mu_t$ where
$\mu_h$ is supported on $[0,1-d2^{-k}]$ and $\mu_t$ is supported
on $(1-d2^{-k},1-2^{-k})$. Let $e_1,\ldots, e_d$ be the right inverses to
$\Phi$ defined onto $[0,1/d), [1/d,2/d),\ldots [(d-1)/d,1)$ respectively.
For all integer tuples $\ell=(\ell_1,\ldots,\ell_d)$ such that $\ell_1\geqslant 2^{\alpha k-1}$
and $\sum \ell_i=2^{\alpha k}$, define
\[\mu_\ell=e_{1\#}(\ell_1 2^{-\alpha k}\mu_h+\mu_t)+\sum_{i>1} e_{i\#}(\ell_i 2^{-\alpha k}\mu_h)\]
(see figure \ref{fig:antecedents} that illustrates the case $d=2$).
It is a probability measure on $\mathbb{S}^1$,
lies in $A_k$ and $\Phi_{d\#}(\mu_\ell)=\mu$. Moreover, if $\ell'\neq\ell$
then the masses given by $\mu_{\ell'}$ and $\mu_\ell$ to some interval
$[(i-1)/d,i/d)$ differ by at least $2^{-\alpha k-1}$, and
the $2^{-k}d^{-1}$-long rightmost part of each such interval is given
no mass by $\mu_\ell$ and $\mu_{\ell'}$.
It follows that any transport plan $\Pi$ from $\mu_\ell$ to $\mu_{\ell'}$ 
has to give a mass at least $2^{-\alpha k-1}$ to the set
of pairs $(x,y)\in\mathbb{S}^1$ such that $|x-y|\geqslant2^{-k}d^{-1}$
(one sometimes says that $\Pi$ moves a mass at least 
$2^{-\alpha k-1}$ by a distance at least $2^{-k}d^{-1}$).
Therefore,
\[\wassd_p(\mu_\ell,\mu_{\ell'})\geqslant d^{-1}2^{-k(\alpha/p+1)-1/p}.\]

\begin{figure}[htp]\begin{center}
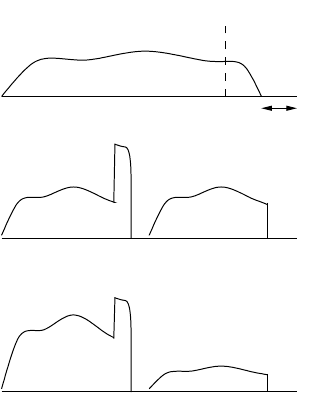
\caption{Construction of separated antecedents of a given measure.}
\label{fig:antecedents}
\end{center}\end{figure}

Let $\varepsilon=d^{-1}2^{-k(\alpha/p+1)-1/p}$
and define $S_n$ inductively as follows.
First, $S_0=\{\delta_0\}$. Given $S_n\subset A_k$, $S_{n+1}$
is the set of all $\mu_\ell$ constructed above, where $\mu$ runs through
$S_n$.

By construction, $S_{n+1}$ has at least $C2^{\alpha k(d-1)}$ times
has many elements as $S_n$, for some constant $C$ depending only on $d$. 
Then $S_n$ has at least  $C^n 2^{n\alpha k(d-1)}$ elements.
Let $\mu$, $\nu$
be two distinct elements of $S_n$ and $m$ be the greatest index such that
$\Phi_{d\#}^m\mu\neq \Phi_{d\#}^m\nu$. Since $\Phi_{d\#}^n\mu=\delta_0=\Phi_{d\#}^n\nu$,
$m$ exists and is at most $n-1$. The measures $\mu'=\Phi_{d\#}^m\mu$ and 
$\nu'=\Phi_{d\#}^m\nu$ both lie in $S_{n-m}$ and have the same image. Therefore,
they are $\varepsilon$-separated. This shows that $S_n$ is $(n,\varepsilon)$-separated.

It follows that 
\begin{eqnarray*}
\frac{\log N(\Phi_{d\#},\varepsilon,n)}{n|\log\varepsilon|}
  &\geqslant& 
  \frac{C}{|\log\varepsilon|}+\frac{\alpha(d-1)}{\frac{\alpha}{p}+1}
  \left(\frac{-\frac1p-\frac{\log d}{\log 2}}{|\log\varepsilon|}+1 \right) \\
  &\geqslant& \frac{\alpha(d-1)}{\frac{\alpha}{p}+1}(1+o(1))+o(1).
\end{eqnarray*}
In the case of a general $\varepsilon$, we get the same bound on
$\log N$ up to an additive term $n\alpha(d-1)\log 2$, so that
\[\mdim(\Phi_{d\#},\wassd_p) \geqslant \frac{\alpha(d-1)}{\frac{\alpha}{p}+1}.\]
By taking $\alpha\to\infty$ we get $\mdim(\Phi_{d\#},\wassd_p)\geqslant p(d-1)$.
\end{proof}

%%%%%%%%%%%%%%%%%%%%%%%%%%%%%%%%%%%%%%%%%%%%%%%%%%%%%%%%%%%%%%%%%%%%%%%%%%%%%%%%
%%%%%%%%%%%%%%%%%%%%%%%%%%%%%%%%%%%%%%%%%%%%%%%%%%%%%%%%%%%%%%%%%%%%%%%%%%%%%%%%
%%%%%%%%%%%%%%%%%%%%%%%%%%%%%%%%%%%%%%%%%%%%%%%%%%%%%%%%%%%%%%%%%%%%%%%%%%%%%%%%
\section{The first-order differential structure on measures}

In this section we give a short account on the work of Gigli \cite{Gigli}
in the particular case of the circle.
Note that considering the Wasserstein space of a Riemannian manifold as an
infinite-dimensionnal Riemannian manifold dates back to the work
of Otto \cite{Otto}. 
However, in many ways it stayed at the formal level until the work of Gigli.

%%%
\subsection{Why bother with this setting?}

Before getting started, let us explain why we do not simply use the natural affine
structure on $\measures$,
the tangent space at a point simply consisting on signed measures having
zero total mass. Similarly, one could consider simpler to just
take the smooth functions of $\mathbb{S}^1$ as coordinates to define a smooth structure
on $\measures$. 

The first argument against these points of view is that optimal transportation is
about pushing mass, not (directly) about recording the variation of density at each point.

More important, these simple ideas would lead to a path of the form 
$\gamma_t=t\delta_x+(1-t)\delta_y$ being smooth. However, the Wasserstein
distance between $\gamma_t$ and $\gamma_s$ has the order of $\sqrt{|t-s|}$,
so that $\gamma_t$ is not rectifiable (it has infinite length)! This also holds,
for example, for convex sums of measures with different supports.

One could argue that the previous paths can be made Lipschitz by using $\wassd_1$
instead of $\wassd_2$, so let us give another argument:
in the affine structure, the Lebesgue measure does not have a tangent space but only a 
tangent cone since $\lambda+t\mu$ is not a positive measure for all small
$t$ unless $\mu\ll\lambda$. If one wants to consider singular measures in the same
setting than regular ones, the $\wassd_2$ setting seems to be the right tool.

Note that it will appear that the differential structure on $\measures$ depends
not only on the differential structure of the circle, but also on its metric.
This should not be considered surprising: in finite dimension, the fact that the
differential structures are defined independently of any reference to a metric comes
from the equivalence of norms in Euclidean space: here, in infinite dimension, even the simple
formula $\wassd(f(\mu+tv),f(\mu)+tD_xf(v)) = o(t)$ involves a metric in a crucial way.

One could also be
surprised that this differential structure involving the metric of the circle could
be preserved by expanding maps of non-constant derivative. This point shall be
cleared in Section \ref{sec:general}, see Proposition \ref{prop:centering} and the
discussion before it.

%%%
\subsection{The exponential map}

 Note that as is customary
in these topics, by a geodesic we mean a non-constant globally minimizing geodesic segment
or line, parametrized proportionaly to arc length.

Given $\mu\in\measures$, there are several equivalent ways to define its
tangent space $T_\mu$. In fact, $T_\mu$ has a vectorial structure only when 
$\mu$ is atomless; otherwise it is only a tangent cone. Note that the atomless
condition has to be replaced by a more intricate one in higher dimension.

The most Riemannian way to construct $T_\mu$ is to use the exponential map.
Let $\mathscr{P}(T\mathbb{S}^1)_\mu$ be the set of probability measures 
on the tangent bundle
$T\mathbb{S}^1$ that are mapped  to $\mu$ by the canonical projection.

Given $\xi,\zeta\in \mathscr{P}(T\mathbb{S}^1)_\mu$, one defines
\[\wassd_\mu(\xi,\zeta) = \left(\inf_\Pi \int_{T\mathbb{S}^1\times T\mathbb{S}^1} d^2(x,y)
  \,\Pi(dx dy)\right)^{1/2}\]
where $d$ is any metric whose restriction to the fibers is the riemannian
distance (here the fibers are isometric to $\mathbb{R}$), and the infimum 
is over transport plans $\Pi$ that are mapped to the identity
$(\Id,\Id)_\#\mu$ by the canonical projection on $\mathbb{S}^1\times 
\mathbb{S}^1$. This means that we allow only to move the mass \emph{along}
the fibers. Equivalently, one can disintegrate $\xi$ and $\zeta$ along $\mu$,
writing $\xi=\int\xi_x \,\mu(dx)$ and $\zeta=\int \zeta_x \,\mu(dx)$, with
$(\xi_x)_{x\in\mathbb{S}^1}$ and $(\zeta_x)_{x\in\mathbb{S}^1}$ two families
of probability measures on $T_x\mathbb{S}^1\simeq \mathbb{R}$ uniquely
defined up to sets of measure zero. Then one gets
\[\wassd_\mu^2(\xi,\zeta)=\int_{\mathbb{S}^1} \wassd^2(\xi_x,\zeta_x) \mu(dx)\]
where one integrates the squared Wasserstein metric defined with respect to the
Riemannian metric, that is $|\cdot|$.

There is a natural cone structure on $\mathscr{P}(T\mathbb{S}^1)_\mu$, extending the scalar 
multiplication on the tangent bundle: letting $D_r$ be the 
dilation of ratio $r$ along fibers, acting on $T\mathbb{S}^1$, one defines 
$r\cdot \xi:=(D_r)_\#\xi$.

The exponential map $\exp:T\mathbb{S}^1\to \mathbb{S}^1$ now gives a map
\[\exp_\# : \mathscr{P}(T\mathbb{S}^1)_\mu\to\measures.\]
The point is that not for all 
$\xi\in \mathscr{P}(T\mathbb{S}^1)_\mu$, is there a $\varepsilon>0$ such that 
$t\mapsto \exp_\#(t\cdot\xi)$ defines a geodesic of $\measures$ on 
$[0,\varepsilon)$. Consider for example $\mu=\lambda$, and $\xi$ be defined
by $\xi_x\equiv1$. Then $\exp_\#(t\cdot\xi)=\lambda$ for all $t$: one rotates all
the mass while letting it in place would be more efficient.

The first definition is that $T_\mu$ is the closure in $\mathscr{P}(T\mathbb{S}^1)_\mu$ of the subset
of all $\xi$ such that $\exp_\#(t\cdot\xi)$ defines a geodesic for small 
enough $t$.

%%%%%%%%%%%%%%%%%%%%%%%%%%%%%%%%%%%%%%%%%%%%%%%%%%%%%%%%%%%%%%%%%
\subsection{Another definition of the tangent space}

Let us now give another definition, assuming $\mu$ is atomless.
We denote by $|\cdot|_{L^2(\mu)}$ the norm defined by the measure $\mu$, and
by $|\cdot|_2$ the usual $L^2$ norm defined by the Lebesgue measure
$\lambda$.

Given a smooth 
function $f:\mathbb{S}^1\to\mathbb{R}$, its gradient 
$\nabla f:\mathbb{S}^1\to T\mathbb{S}^1$ can be used to push $\mu$
to an element $\xi_f=(\nabla f)_\#\mu$ of $\mathscr{P}(T\mathbb{S}^1)_\mu$.
 This element has the 
property that $\exp_\#(t\cdot\xi)=(\Id+t\xi_f)_\#\mu$ defines a geodesic for small 
enough $t$, with a time bound depending on 
$\nabla f$ and not on $\mu$. More precisely,
the geodesicness holds as soon as no mass is moved
at a distance exceeding $1/2$, and no element of mass crosses another one,
and these conditions translate to $t (\nabla f)'(x)\geqslant -1$ for all
$x$. This is a particular case of Kantorovich duality, see for example
\cite{Villani2}, especially figure 5.2.

Now, let $L^2_0(\mu)$ be the set of all vector fields $v\in L^2(\mu)$
that are $L^2(\mu)$-approximable by gradient of smooth functions.
Then the image of the map $v\mapsto (\Id,v)_\#\mu$ defined on $L^2_0(\mu)$ 
with value in $\mathscr{P}(T\mathbb{S}^1)_\mu$ is precisely $T_\mu$.
In particular, this means that as soon as $\mu$ is atomless, the disintegration
$(\xi_x)_x$ of an element of $T_\mu$ writes $\xi_x=\delta_{v(x)}$ for some
function $v$ and $\mu$-almost all $x$. Moreover, $v$ is $L^2(\mu)$-approximable
by gradient of smooth functions; note that among smooth vector fields,
gradients are characterized by $\int \nabla f \lambda = 0$.
We shall freely identify the tangent space with $L^2_0(\mu)$ whenever $\mu$
has no atom.

In the important case when $\mu=\rho\lambda$ for some positive
continuous density $\rho$,
a vector field $v\in L^2(\mu)$ is approximable by gradient of smooth functions
if and only if $\int v\lambda = 0$.
We get that in this case, $T_\mu$ can be 
identified with the set of functions $v:\mathbb{S}^1\to \mathbb{R}$
that are square-integrable with respect to $\mu$ and of mean zero 
with respect to $\lambda$.  When $\mu$
is the uniform measure, we write $L^2_0$ instead of $L^2_0(\lambda)$.
Note that if $v\in L^2(\mu)$ has neither its negative part nor its
positive part $\lambda$-integrable, then it can be approximated in
$L^2(\mu)$ norm by gradient of smooth functions, and that
if $\mu$ has not full support, then $L^2_0(\mu)=L^2(\mu)$.

For simplicity, given $v\simeq \xi\in L^2_0(\mu)\simeq T_\mu$ we shall denote
$\exp_\#(t\cdot\xi)$ by $\mu+tv$. In other words,
$\mu+tv=(\Id+tv)_\#\mu$.

This point of view is convenient, in particular because the distance between
exponential curves issued from $\mu$ can be estimated easily:
\[\wassd(\mu+tv,\mu+tw)\underset{t\to 0}\sim t|v-w|_{L^2(\mu)}.\]
 Note that when $v$ is differentiable,
then by geodesicness for $t$ small enough we have
\[\wassd(\mu,\mu+tv) = t |v|_{L^2(\mu)}\]
and not only an equivalence. This will prove useful in the next subsection
where several measures and vector fields will be involved.

%%%%%%%%%%%%%%%%%%%%%%%%%%%%%%%%%%%%%%%%%%%%%%%%%%%%%%%%%%%%%%%%%%%%%%%%%%%%%%%
\subsection{Two properties}

We shall prove that the exponential map can be used to construct
bi-Lipschitz embeddings of small, finite-dimensional balls into $\measures$,
then we shall study how the density of an absolutely continuous
measure evolves when pushed by a small vector field.

The following natural result shall be used in the proof of Theorem 
\ref{theo:almost-invariant}.
\begin{prop}\label{prop:embedding}
Given $\mu\in \measures$ and $(v_1,\ldots,v_n)$
continuous, linearly independent vector fields in $L^2_0(\mu)$,
there is an $\eta>0$ such that the map $B^n(0,\eta)\to\measures$ defined
by $E(a)=\mu+\sum a_i v_i$ is bi-Lipschitz.
\end{prop}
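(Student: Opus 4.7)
The plan is to prove the two Lipschitz bounds separately. For the upper bound, set $w_a := \sum_i a_i v_i$ and use the coupling $(\Id+w_a,\Id+w_b)_\#\mu$ as a transport plan between $E(a)$ and $E(b)$; its quadratic cost is at most $\int|w_a-w_b|^2\,d\mu = |w_a-w_b|_{L^2(\mu)}^2$. Combined with the bound $|w_a-w_b|_{L^2(\mu)} \leq \bigl(\sum_i|v_i|_{L^2(\mu)}\bigr)|a-b|$, coming from the fact that $a\mapsto w_a$ is a continuous linear map from $\mathbb{R}^n$ to $L^2(\mu)$, this yields $\wassd(E(a),E(b)) \leq C|a-b|$.

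For the lower bound, linear independence of the (continuous) $v_i$'s in $L^2(\mu)$, together with compactness of the unit sphere of $\mathbb{R}^n$, provides a positive constant $c_0 := \min_{|a|=1}|w_a|_{L^2(\mu)}$; hence $|w_a-w_b|_{L^2(\mu)} \geq c_0|a-b|$. It therefore suffices to find $\eta>0$ such that $\wassd(E(a),E(b)) \geq \tfrac{1}{2}|w_a-w_b|_{L^2(\mu)}$ for all $a,b \in B^n(0,\eta)$. For this I would establish the following uniform near-isometry property on the finite-dimensional subspace $V := \mathrm{span}(v_1,\ldots,v_n)$: for every $\epsilon>0$ there exists $\eta>0$ such that
\[
  \wassd(\mu+v,\mu+w) \geq (1-\epsilon)|v-w|_{L^2(\mu)}
\]
for all $v,w \in V$ with $|v|_{L^2(\mu)},|w|_{L^2(\mu)}\leq\eta$.

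The main obstacle is upgrading the pointwise asymptotic $\wassd(\mu+tv,\mu+tw) \sim t|v-w|_{L^2(\mu)}$ (for fixed $v,w$ as $t\to 0$, recalled in the paper just before the proposition) to the displayed uniform bound over $V$. The plan is a contradiction argument: if uniformity fails, sequences $v_k, w_k \in V$ would exist with $v_k,w_k\to 0$ and $\wassd(\mu+v_k,\mu+w_k) < (1-\epsilon_0)|v_k-w_k|_{L^2(\mu)}$. Rescaling by $\max(|v_k|_{L^2(\mu)},|w_k|_{L^2(\mu)})$ and using compactness of closed balls in the finite-dimensional $V$ (on which all norms, including the $C^0$ norm, are equivalent since the $v_i$ are continuous), I extract limiting basepoints and directions. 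Combining the triangle inequality with the transport-plan upper bound $\wassd(\mu+v,\mu+v')\leq|v-v'|_{L^2(\mu)}$ reduces the situation to the pointwise asymptotic at the limiting configuration, yielding the desired contradiction.
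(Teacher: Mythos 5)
Your upper bound is exactly the paper's. For the lower bound, however, there is a genuine gap in the compactness step. After rescaling by $t_k=\max(|v_k|_{L^2(\mu)},|w_k|_{L^2(\mu)})$ you obtain $\hat v_k\to\hat v$, $\hat w_k\to\hat w$, and the triangle inequality together with the transport bound $\wassd(\mu+v,\mu+v')\leqslant|v-v'|_{L^2(\mu)}$ gives
\[
\wassd(\mu+t_k\hat v,\mu+t_k\hat w)\leqslant t_k|\hat v-\hat v_k|_{L^2(\mu)}+(1-\epsilon_0)\,t_k|\hat v_k-\hat w_k|_{L^2(\mu)}+t_k|\hat w_k-\hat w|_{L^2(\mu)},
\]
which contradicts the pointwise asymptotic only when $|\hat v-\hat w|_{L^2(\mu)}>0$. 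Nothing prevents the failing pairs from satisfying $|v_k-w_k|_{L^2(\mu)}=o(t_k)$, i.e. $\hat v=\hat w$: then both sides are $o(t_k)$, the error terms $t_k|\hat v_k-\hat v|$ and $t_k|\hat w_k-\hat w|$ need not be small compared with $t_k|\hat v_k-\hat w_k|$, and no contradiction is reached. Rescaling by $|v_k-w_k|_{L^2(\mu)}$ instead does not repair this: the basepoints $v_k/|v_k-w_k|_{L^2(\mu)}$ may diverge, and what you would then need is a first-order isometry of the exponential map \emph{based at the moving measure} $\mu+v_k$ — which is precisely the uniform statement you are trying to prove, so the argument becomes circular. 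This degenerate regime (bounding $\wassd(\mu+v,\mu+v+h)$ from below when $|h|\ll|v|$) is in fact the heart of the proposition, since it concerns perturbations of the already perturbed measure $E(a)$ rather than of $\mu$.

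The paper's proof addresses exactly this point by an explicit construction rather than compactness: it re-bases at $\nu=E(a)$, builds the vector field $\tilde v=\sum(b_i-a_i)v_i\circ\tilde\psi^{-1}$ (with $\tilde\psi=\Id+\sum a_iw_i$, where the $w_i$ are smooth gradients uniformly close to the $v_i$), and exploits the \emph{exact} equality $\wassd(\nu,\nu+\tilde w)=|\tilde w|_{L^2(\nu)}$ valid for smooth gradient fields with $\tilde w'\geqslant-1$, controlling all errors by the common modulus of continuity of the finitely many $v_i$; this yields $\wassd(E(a),E(b))\geqslant\frac14|b-a|$ uniformly on $B^n(0,\eta)$. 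Note also that the proposition does not assume $\mu$ atomless, whereas the asymptotic $\wassd(\mu+tv,\mu+tw)\sim t|v-w|_{L^2(\mu)}$ you invoke was recalled under the identification $T_\mu\simeq L^2_0(\mu)$, which the paper states for atomless $\mu$; the paper's route avoids this issue by relying only on geodesicness of $t\mapsto\nu+t\tilde w$ for smooth gradients, which holds with or without atoms. If you want to keep a compactness-style proof, you would at least have to prove a version of the first-order isometry that is uniform in the base measure $\mu+v$, $v\in V$ small — essentially redoing the paper's estimates.
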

The difficulty is only technical: we already know that $E$ is bi-Lipschitz
along rays and we need some uniformity in the distance estimates to prove
the global bi-Lipschitzness. The continuity hypothesis is not satisfactory
but is all we need in the sequel.

Note that we did not assume that $\mu$ has no atom; when it has, $L^2_0(\mu)$
(still defined as the closure in $L^2(\mu)$ of gradients of smooth functions)
is not the tangent cone $T_\mu\measures$ but only a part of it. Note that
if $v$ is a $C^1$ vector field of vanishing $\lambda$-mean,
$(\mu+tv)_t$ still defines a geodesic as long as $tv'\geqslant -1$.

\begin{proof}
Let $a,b\in B^n$. The plan $(\Id+\sum a_i v_i,\Id+\sum b_i v_i)_\#\lambda$
transports $E(a)$ to $E(b)$
at a cost 
\[\left|\sum (a_i-b_i)v_i\right|_2^2 
\leqslant \left(\sum |v_i|_2^2\right)\, |a-b|^2\]
so that $E$ is Lipschitz.

Up to a linear change of coordinates, we assume that the $v_i$ form
an orthonormal family of $L^2_0(\mu)$. To bound the distance between
$E(a)$ and $E(b)$ from below, we shall design a vector field $\tilde v$
such that pushing $E(a)$ by $\tilde v$ gives a measure close to 
$E(b)$.

Choose $\varepsilon>0$
such that for all $i$ we have 
\[|x-y|\leqslant\varepsilon \Rightarrow |v_i(x)-v_i(y)|\leqslant \frac{1}{4\sqrt{n}}.\]
Assume moreover $\varepsilon<1/8$.

Let $w_i$ be gradient of smooth functions such that
$|v_i-w_i|_\infty\leqslant \varepsilon$.
Let $\eta>0$ be small enough to ensure $2\sqrt{n}\eta\leqslant 1$ and
$w_i'\geqslant -(4n\eta)^{-1}$ fo all $i$.

Fix $a,b\in B^n(0,\eta)$ and introduce two maps defined by
$\psi(y)=y+\sum a_i v_i(y)$ and $\tilde\psi(y)=y+\sum a_i w_i(y)$.
Note that $\tilde\psi'\geqslant 1/2$ so that $\tilde\psi$ is
a diffeomorphism and $\tilde\psi^{-1}$ is $2$-Lipschitz. Let
$\tilde v = \sum (b_i-a_i)v_i\circ\tilde\psi^{-1}$.

On the first hand, given any $y\in\mathbb{S}^1$, we have
\[|\tilde\psi(y)-\psi(y)|\leqslant |a|\left(\sum(w_i(y)-v_i(y))^2\right)^{1/2} 
  \leqslant |a|\sqrt{n}\varepsilon\]
so that
\[|y-\tilde\psi^{-1}\psi(y)|\leqslant 2\sqrt{n}|a|\varepsilon\leqslant \varepsilon\]
and
\[\left|v_i(\tilde\psi^{-1}\psi(y))-v_i(y)\right|\leqslant\frac1{4\sqrt{n}}.\]
It follows that
\[\left|\sum(b_i-a_i)(v_i(\tilde\psi^{-1}\psi(y)) -v_i(y))\right|\leqslant\frac14|b-a|,\]
and therefore
\begin{equation}
\left|\tilde v\circ\psi-\sum(b_i-a_i)v_i\right|_{L^2(\nu)}\leqslant\frac14|b-a|
\label{eq:lip1}
\end{equation}
where $\nu$ could be any probability measure. We shall take
$\nu=\mu+\sum a_i v_i$.

Similarly,
\begin{eqnarray}
|\tilde v|_{L^2(\nu)} &=& \left(\int \tilde v^2(x) \,(\psi_\#\mu)(dx)\right)^{1/2} \nonumber\\
  &=& \left(\int \tilde v^2(\psi x)\,\mu(dx)\right)^{1/2} \nonumber\\
  &=& \left|\sum(b_i-a_i)v_i\tilde\psi^{-1}\psi\right|_{L^2(\mu)} \nonumber\\
  &\geqslant& \frac34\left|\sum(b_i-a_i)v_i\right|_{L^2(\mu)} \nonumber\\
|\tilde v|_{L^2(\nu)}  &\geqslant& \frac34 |b-a|.
\end{eqnarray}

On the other hand, we have
\[ \wassd\left(\mu+\sum a_i v_i, \mu+\sum b_i v_i\right)\geqslant
  \wassd(\nu,\nu+\tilde v)-\wassd\left(\nu+\tilde v,\mu+\sum b_i v_i\right).\]

Let $\tilde w=\sum(b_i-a_i)w_i\circ\tilde\psi^{-1}$. We have
$|\tilde v-\tilde w|_\infty\leqslant \varepsilon |b-a|$.
In particular, $|\tilde w|_{L^2(\nu)}\geqslant\frac58|b-a|$.
The choice of $\eta$ ensures that $\tilde w'\geqslant-1$, so that
\[\wassd(\nu,\nu+\tilde w)=|\tilde w|_{L^2(\nu)}\geqslant \frac58|b-a|.\]
Since $\wassd(\nu+\tilde v,\nu+\tilde w)\leqslant |\tilde v-\tilde w|_\infty$
we get
\begin{equation}
\wassd(\nu,\nu+\tilde v)\geqslant \frac12|b-a|.
\end{equation}
Finally, since $\nu+\tilde v= (\psi+\tilde v \psi)_\#\mu$,
\eqref{eq:lip1} shows that 
\[\wassd\left(\nu+\tilde v,\mu+\sum b_i v_i\right)\leqslant\frac14|b-a|\]
so that
\[ \wassd\left(\mu+\sum a_i v_i, \mu+\sum b_i v_i\right)\geqslant \frac14|b-a|.\]
\end{proof}

\begin{prop}\label{prop:density}
Let $\rho$ be a $C^1$ density and $v:\mathbb{S}^1\to \mathbb{R}$
be a $C^1$ vector field. Then for $t\in\mathbb{R}$ small enough
$\rho\lambda+tv$ is absolutely continuous and its density
$\rho_t$ is continuous and satisfy
\[\rho_t(x) = \rho(x) -t(\rho v)'(x) + o(t)\]
where the remainder term is independent of $x$.
\end{prop}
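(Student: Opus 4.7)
The plan is to use the change-of-variables formula for push-forward by a diffeomorphism, then expand carefully to first order in $t$, using the compactness of $\mathbb{S}^1$ to get a remainder uniform in $x$.

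First I would set $\phi_t := \Id + tv$ and observe that, since $v'$ is continuous on the compact circle, there exists $t_0 > 0$ such that $\phi_t'(y) = 1 + tv'(y) \geq 1/2$ for all $|t| \leq t_0$ and all $y$. For such $t$, $\phi_t$ is a $C^1$ diffeomorphism of $\mathbb{S}^1$, so $\rho\lambda + tv = (\phi_t)_\#(\rho\lambda)$ is absolutely continuous with density
\[\rho_t(x) = \frac{\rho(\phi_t^{-1}(x))}{1 + tv'(\phi_t^{-1}(x))},\]
which is continuous in $x$ (and even $C^1$).

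Next I would expand this expression. Writing $y = \phi_t^{-1}(x)$, the inverse function theorem, applied uniformly using the uniform lower bound on $\phi_t'$, gives $y = x - tv(x) + r_1(t,x)$ with $r_1(t,x) = o(t)$ uniformly in $x$. Since $\rho$ and $v'$ are $C^1$ (hence uniformly continuous, with $\rho'$ and $v'$ uniformly continuous), I get uniform-in-$x$ expansions
\[\rho(y) = \rho(x) - t\,v(x)\rho'(x) + o(t), \qquad v'(y) = v'(x) + o(1),\]
and consequently
\[\frac{1}{1 + tv'(y)} = 1 - tv'(x) + o(t)\]
uniformly in $x$, using $|1 + tv'(y)| \geq 1/2$ throughout.

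Multiplying the two expansions and collecting terms,
\[\rho_t(x) = \bigl(\rho(x) - tv(x)\rho'(x) + o(t)\bigr)\bigl(1 - tv'(x) + o(t)\bigr) = \rho(x) - t\bigl(v(x)\rho'(x) + \rho(x)v'(x)\bigr) + o(t),\]
which is exactly $\rho(x) - t(\rho v)'(x) + o(t)$ with remainder uniform in $x$. The only delicate point is ensuring the uniformity of every $o(t)$ and $o(1)$ term; this is not really an obstacle since all the functions involved are $C^1$ on a compact manifold, so uniform continuity takes care of it, but it is the step that must be written with some care to justify the final claim that the remainder is independent of $x$.
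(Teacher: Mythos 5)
Your proposal is correct and follows essentially the same route as the paper: the change-of-variables formula gives the explicit density $\rho_t=\frac{\rho}{1+tv'}\circ(\Id+tv)^{-1}$, which is then expanded to first order in $t$, with the uniformity in $x$ coming from uniform continuity of the $C^1$ data on the compact circle. You merely spell out the uniform estimates (the lower bound $1+tv'\geq 1/2$ and the uniform expansion of $(\Id+tv)^{-1}$) that the paper leaves implicit.
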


\begin{proof}
Let $t$ be small enough so that $\Id+tv$ is a diffeomorphism.
By a change of variable, we see that 
\begin{eqnarray*}
\rho_t &=& \frac{\rho}{1+tv'}\circ(\Id+tv)^{-1}\\
       &=& \left(\rho(1-tv')\right)\circ(\Id-tv)+o(t)\\
       &=& \rho-t(\rho'v+v'\rho)+o(t)
\end{eqnarray*}
where the $o(t)$ term depends upon $\rho$
and $v$ but is uniform in $x$.
\end{proof}
Note that the $o(t)$ depends in particular on the
moduli of continuity of $v'$ and $\rho'$ and need not
be an $O(t^2)$ unless $v$ and $\rho$ are $C^2$.

%%%%%%%%%%%%%%%%%%%%%%%%%%%%%%%%%%%%%%%%%%%%%%%%%%%%%%%%%%%%%%%%%%%%%%%%%%%%%%%%
%%%%%%%%%%%%%%%%%%%%%%%%%%%%%%%%%%%%%%%%%%%%%%%%%%%%%%%%%%%%%%%%%%%%%%%%%%%%%%%%
%%%%%%%%%%%%%%%%%%%%%%%%%%%%%%%%%%%%%%%%%%%%%%%%%%%%%%%%%%%%%%%%%%%%%%%%%%%%%%%%
\section{First-order dynamics in the model case}\label{sec:firstorder}

In this section we show that $\Phi_{d\#}$ is (weakly) differentiable at the point 
$\lambda$. Its derivative is an
explicit, simple endomorphism of a Hilbert space, and we shall give a brief
study of its spectrum.

\begin{theo}\label{theo:differential}
Let $\mathscr{L}_d:L^2_0\to L^2_0$ be the linear operator defined by
\[\mathscr{L}_d v(x)= v(x/d)+v((x+1)/d)+\dots+v((x+d-1)/d).\]
 Then $\mathscr{L}_d$ is the
derivative of $\Phi_{d\#}$ at $\lambda$ in the following sense:
for all $v\in L^2_0\simeq T_\lambda$, one has
\[\wassd\left(\Phi_{d\#}(\lambda+tv),\lambda+t\mathscr{L}_d(v)\right)=o(t).\]
\end{theo}
First, we recognize in $\mathscr{L}_d$ a multiple of
the Perron-Frobenius operator of $\Phi_d$,
that is the adjoint of the map $u\mapsto u\circ \Phi$, acting on the space $L^2_0$.
Second, we only get a G\^ateaux derivative, when one would prefer a Fr\'echet one,
that is a formula of the kind
\[\wassd(\Phi_{d\#}(\lambda+v),\lambda+\mathscr{L}_d(v))=o(|v|).\]
However, we shall see that such a uniform bound does not
hold. 
However, one easily gets uniform remainder terms in restriction to any finite-dimensional
subspace of $L^2_0$.

%%%%%%%%%%%%%%%%%%%%%%%%%%%%%%%%%%%%%%%%%%%%%%%%%%%%%%%%%%%%%%%%%%%%%%%%%%%%%%%%%%%
\subsection{Differentiability of $\Phi_{d\#}$}

The main point to prove in the above theorem is the following estimate;
this is where the original article contained a mistake.%
\footnote{More precisely, in \cite{Original} 
the right-hand side of the first inline equation in Lemma 4.2 should be
$\varepsilon t+2^{-3/2}\varepsilon$ rather than 
$(1+2^{-3/2})\varepsilon t$.
This mistake can be corrected by estimating how well 
a piecewise constant density with $k$ pieces of equal length
 can approximate the given density.
Then the issue is moved to the main argument: in order to ultimately get a $o(t)$
remainder, we need to take advantage of the presence of many overlaps (as in Figure \ref{fig:transport}), which only exist
if $k$ increases not too fast with respect to $t$. This can be ensured by adding the regularity hypothesis.
We shall only used Lemma 4.2 for positive $C^1$ densities, so
this hypothesis is harmless.}

\begin{lemm}\label{lemm:composition}
Given a \emph{H\"older continuous} and \emph{positive} density $\rho$, vector fields 
$v_1,\dots,v_n\in L^2(\rho\lambda)$ and positive numbers 
$\alpha_1,\dots \alpha_n$ summing up to $1$, one has
\[\wassd\Big( \rho\lambda+t\smallsum_i \alpha_i v_i, 
  \smallsum_i \alpha_i (\rho\lambda+t v_i)\Big) = o(t).\]
\end{lemm}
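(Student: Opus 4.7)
My plan starts from the observation that both
\[\mu^{(1)}:=(\Id+t\bar v)_\#(\rho\lambda)\qquad\text{and}\qquad\mu^{(2)}:=\sum_i\alpha_i(\Id+tv_i)_\#(\rho\lambda),\]
with $\bar v:=\sum_i\alpha_i v_i$, are first-order perturbations of $\rho\lambda$ whose leading density changes coincide. The naive transport plan $\sum_i\alpha_i(\Id+t\bar v,\Id+tv_i)_\#(\rho\lambda)$ only produces $\wassd(\mu^{(1)},\mu^{(2)})=O(t)$, so to gain a factor I would compare the two measures at the level of densities and then translate a uniform density estimate into a Wasserstein estimate.

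First I would treat the smooth case, assuming $\rho,v_1,\ldots,v_n\in C^1$. By Proposition \ref{prop:density}, $\mu^{(1)}$ has density $\rho-t(\rho\bar v)'+o(t)$ uniformly in $x$, and each measure $\rho\lambda+tv_i$ has density $\rho-t(\rho v_i)'+o(t)$ uniformly. Taking the weighted average and using linearity of the derivative, $\mu^{(2)}$ has density $\rho-t\sum_i\alpha_i(\rho v_i)'+o(t)=\rho-t(\rho\bar v)'+o(t)$, so the two densities agree up to an $o(t)$ uniform error. Provided $\rho$ is bounded below by some $c>0$ (which holds in every application in the paper, where $\rho\equiv 1$ or $\rho$ is the strictly positive continuous invariant density of a $C^2$ expanding map), the standard inverse-CDF bound on $\mathbb{S}^1$ then gives $\wassd(\mu^{(1)},\mu^{(2)})\leq c^{-1}|\rho_t^{(1)}-\rho_t^{(2)}|_\infty\cdot(1+o(1))=o(t)$.

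To remove the $C^1$ hypothesis on the $v_i$, I would approximate each $v_i$ in $L^2(\rho\lambda)$ by a smooth $\tilde v_i$ and denote by $\tilde\mu^{(1)},\tilde\mu^{(2)}$ the analogues of $\mu^{(1)},\mu^{(2)}$ built from the $\tilde v_i$. The coupling $(\Id+t\bar v,\Id+t\sum_i\alpha_i\tilde v_i)_\#(\rho\lambda)$ together with the convexity inequality $\wassd_2^2(\sum_i\alpha_i\mu_i,\sum_i\alpha_i\nu_i)\leq\sum_i\alpha_i\wassd_2^2(\mu_i,\nu_i)$ yields
\[\wassd(\mu^{(j)},\tilde\mu^{(j)})\leq t\Bigl(\sum_i\alpha_i|v_i-\tilde v_i|_{L^2(\rho\lambda)}^2\Bigr)^{1/2}\qquad(j=1,2).\]
Given $\varepsilon>0$ I would pick the $\tilde v_i$ so that the right-hand side is $<\varepsilon t$, apply the smooth case to get $\wassd(\tilde\mu^{(1)},\tilde\mu^{(2)})<\varepsilon t$ for $t$ small, and conclude via the triangle inequality that $\wassd(\mu^{(1)},\mu^{(2)})<3\varepsilon t$ for $t$ small enough, hence $=o(t)$.

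The main obstacle is the density-to-Wasserstein step when $\rho$ is not bounded below, since the Lipschitz control of the inverse CDF degenerates on the zero set of $\rho$; this issue does not affect any application developed in the paper, but in full generality it would require either a preliminary smoothing of $\rho$ (whose induced Wasserstein error must be absorbed) or an exhaustion argument on the open support of $\rho$.
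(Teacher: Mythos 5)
Your route is essentially the one the paper alludes to and deliberately skips (``we could deduce this result from Proposition \ref{prop:density}, but for the sake of diversity\dots''): instead of the explicit interval-by-interval transport plans of cost $O(t^3)$ that the paper builds after a piecewise-constant approximation of $\rho$ and the $v_i$, you read off Proposition \ref{prop:density} that both measures have density $\rho-t(\rho\bar v)'$ up to a uniform $o(t)$, and convert this into a Wasserstein bound by the quantile (inverse-CDF) coupling, which is legitimate on the circle once the densities are bounded below --- and for small $t$ they are, being uniformly close to $\rho\geqslant c$. Your reduction from $L^2$ fields to smooth ones is also sound, and on one point cleaner than the paper's argument: since you approximate only the vector fields and not $\rho$, all your approximation errors come multiplied by $t$ (via the convexity of $\wassd^2$ under mixtures and the obvious couplings), which is exactly what an $o(t)$ conclusion requires, whereas the paper's detour through a piecewise-constant $\bar\rho$ creates an error $\wassd(\rho\lambda+t\bar v_i,\bar\rho\lambda+t\bar v_i)$ comparable to $\wassd(\rho\lambda,\bar\rho\lambda)$, which does not shrink with $t$.

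The gap is the one you flag yourself: what you actually prove is the lemma under the extra hypotheses that $\rho$ is $C^1$ and bounded away from $0$ (both are used --- $C^1$ for Proposition \ref{prop:density}, positivity for the Lipschitz bound on the inverse CDF). This covers every application in the paper ($\rho\equiv1$ for Theorem \ref{theo:differential}, the positive $C^1$ invariant density for Theorem \ref{theo:expanding}), but not the statement as written, where $\rho$ is an arbitrary density and the $v_i$ are merely in $L^2(\rho\lambda)$. Neither of your proposed patches is carried out, and the first one (regularize $\rho$ and absorb the induced Wasserstein error) cannot work in the naive way: for any fixed smoothing $\tilde\rho\neq\rho$ one has $\wassd(\rho\lambda+tw,\tilde\rho\lambda+tw)\to\wassd(\rho\lambda,\tilde\rho\lambda)>0$ as $t\to0$, so the error of swapping $\rho$ for $\tilde\rho$ is of order $1$, not $o(t)$, and no triangle inequality through the smoothed measure can yield $o(t)$. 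Treating a general $\rho$ genuinely requires comparing the two measures at scale $t$ (a local matching or Lebesgue-point argument, or interval-wise plans built with the true $\rho$), not a global regularization. In short: correct, and by a genuinely different and arguably more transparent method, for the cases the paper needs; incomplete, with an honestly acknowledged but unfilled hole, at the stated level of generality.
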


The positivity assumption may not be necessary, but at the very least simplifies the proof.

\begin{proof}
We prove the case $n=2$ since the general case can then be deduced by induction.

Let $\varepsilon$ be any positive number, and consider vector fields
$\bar v_i$ ($i=1,2$) that are constant on the intervals 
$[j/k_1,(j+1)/k_1)$ for some $k_1$ and all $j<k_1$ and such that
$\lVert \bar v_i - v_i \rVert_{L^2(\rho\lambda)} \le \varepsilon$.
Note that $k_1$ and the $\bar v_i$ are chosen to depend only on
$\varepsilon$, not on $t$; in particular 
$\lVert \bar v_1-\bar v_2\rVert_{\infty}$ is finite and independent
of $t$.

Now consider any value of $t$, to be taken small enough a few times below.
Let $k=k(t)$ be a multiple of $k_1$ having the
magnitude of $(1/t)^{1/(1+\beta/2)}$ where $\beta$ is the H\"older exponent of
$\rho$, say $kt^{1/(1+\beta/2)}\in [1,2]$.

We define $\bar\rho$ as the density that is constant on each $I_j=[\frac jk,\frac{j+1}k)$,
of value $\bar\rho_j:=k\int_{I_j} \rho \,\dd\lambda$. Denoting by $C$
the H\"older constant of $\rho$, we get
\[\Vert\rho-\bar\rho\rVert_\infty \le Ck^{-\beta}\]
We denote by $\bar v_i(j)$ the value of $\bar v_i$ on $I_j$; observe
that when $t$ is small, these values are the same on many successive intervals since
$k$ is much larger than $k_1$.

Let us first bound above $\wassd(\rho\lambda,\bar\rho\lambda)$.
We consider the monotone rearrangement fixing $0$ as transport plan;
by definition of $\bar\rho$, it preserves each $I_i$.
To simplify notation, let us bound the cost due to the mass located in $I_0$, 
the other intervals behaving in exactly the same way.
The cumulative
distribution functions of $\rho\lambda$ and $\bar\rho\lambda$
are given by
\[F(x)=\int_0^x \rho \,\dd\lambda \quad\mbox{and}\quad 
  G(x)=x\bar\rho_0.\]
The monotone rearrangement is given on $I_0$ by the map $T = G^{-1}\circ F$,
so that the contribution of $I_0$ to its cost is
\begin{align*}
\int_0^{\frac1k} |T(x)-x|^2 \rho(x)\,\dd x 
  &= \int_0^{\frac1k} \bigg|\frac1{\bar\rho_0}\int_0^x \rho \,\dd\lambda
    -\frac1{\bar\rho_0}\int_0^x \bar\rho_0 \,\dd\lambda \bigg|^2  \rho(x)\,\dd x \\
  &\le \int_0^{\frac1k} \Big|\frac1{\bar\rho_0}\int_0^x 
        |\rho-\bar\rho_0| \,\dd\lambda\Big|^2  \rho(x)\,\dd x \\
  &\le \int_0^{\frac1k} \frac{C^2 x^2}{\bar\rho_0^2 k^{2\beta}}
       \rho(x)\,\dd x \\
  &\le \frac{C^2}{\bar\rho_0 k^{3+2\beta}}
\end{align*}
Since the mass lying in $I_0$ is $\bar\rho_0/k$ (for both densities),
the ratio cost per mass is bounded above by 
\[\frac{C^2}{\bar\rho_0^2 k^{2+2\beta}} \le\frac{C^2}{(\min\rho)^2 \, k^{2+2\beta}}.\]
Since this holds in all intervals $I_i$, the overall cost is bounded by
the same value, so that 
\[\wassd(\rho\lambda,\bar\rho\lambda)\le \frac{C}{\min\rho} \frac1{k^{1+\beta}}.\]

The same argument also yields
\[\wassd(\rho\lambda + v,\bar\rho\lambda + v) \le\frac{C}{\min\rho} \frac1{k^{1+\beta}} \]
for any vector field $v$ which is constant on each $I_j$: indeed, if $\Pi$ is a transport plan
from $\rho\lambda$ to $\bar\rho\lambda$, then
$(\Id+v,\Id+v)_\#\Pi$ is a transport plan from
$\rho\lambda + v$ to $\bar\rho\lambda + v$ whose cost is not greater than the cost of $\Pi$
(for each bit of mass moved from $x$ to $T(x)$ by $\Pi$, this new plan moves the 
same amount of mass from $x+v(x)$ to $T(x)+v(T(x))$; the hypothesis that $v$ is
constant on each $I_j$ then ensures that $v(T(x))=v(x)$).
Applying this to $v=t\smallsum\alpha_i \bar v_i$ we get
\[\wassd(\rho\lambda+t\smallsum\alpha_i \bar v_i,\bar\rho\lambda+t\smallsum\alpha_i \bar v_i)
  \le \frac{C}{\min\rho} \frac1{k^{1+\beta}}.\]
Applying the same reasonning to each $v=t \bar v_i$ separately and concatenating
the corresponding transport plan also yields
\[\wassd(\smallsum \alpha_i(\rho\lambda+t \bar v_i),\smallsum\alpha_i (\bar\rho\lambda+t \bar v_i))
  \le \frac{C}{\min\rho} \frac1{k^{1+\beta}}.\]

We will now prove the bound
\[\wassd(\bar\rho\lambda+t\smallsum\alpha_i \bar v_i,
  \smallsum \alpha_i(\bar\rho\lambda+t\bar v_i)) \le 
  t^{3/2}k^{1/2}\lVert\bar v_1-\bar v_2\rVert_\infty^{3/2}.\]

For this, on each $I_j$ the construction pictured in
Figure \ref{fig:transport} gives a transport plan from
$(\Id+t(\alpha_1v_1(j)+\alpha_2v_2(j)))_\# \bar\rho_j\lambda_{|I_j}$ to 
$\alpha_1(\Id+tv_1(j))_\#\bar\rho_j\lambda_{|I_j}
+\alpha_2(\Id+tv_2(j))_\#\bar\rho_j\lambda_{|I_j}$
whose cost is at most
$t^3\bar\rho_j|\bar v_1(j)-\bar v_2(j)|^3$ for a contribution to the
mass of $\bar\rho_j/k$. More precisely, temporarily denoting by
$\rho$, $v_1$ and $v_2$
the values taken by the functions $\bar\rho$ and $\bar v_i$ on $I_j$,
one simply let the common mass in 
place and moves at each side a mass $\alpha_1\alpha_2\rho|v_1-v_2|t$
by a distance at most $|v_1-v_2|t$;
this is not optimal but sufficient for our purpose.

\begin{figure}[tbp]\begin{center}
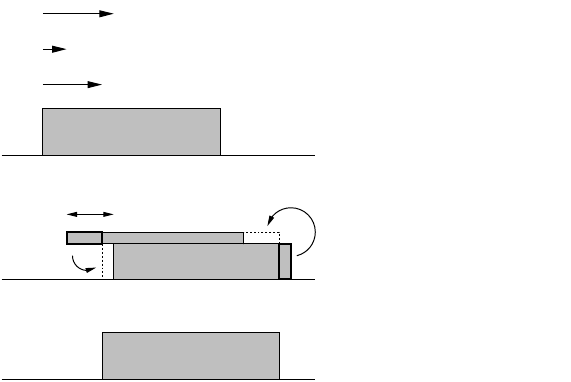
\caption{The cost of this transport plan has the order of magnitude $t^3$}
\label{fig:transport}
\end{center}\end{figure}

The last estimates we need are
\[\wassd\big(\rho\lambda+t\smallsum\alpha_i v_i
,\rho\lambda+t\smallsum\alpha_i \bar v_i\big)\le \varepsilon t\]
and
\[\wassd(\smallsum \alpha_i(\rho\lambda+t \bar v_i),
  \smallsum \alpha_i(\rho\lambda+t v_i))\le \varepsilon t.\]
They are both obtained, as in the original proof, by observing that
for any measure $\mu$ and any vector fields $v,\bar v$ in $L^2(\mu)$,
the transport plan $(\Id+v,\Id+\bar v)_\#\mu$ has cost exactly
$\lVert v-\bar v\rVert_{L^2(\mu)}$.

Using the triangle inequality to combine all these estimates, we get
\begin{multline*}
\wassd\Big(\rho\lambda+t\smallsum \alpha_i v_i, \smallsum \alpha_i(\rho\lambda+t v_i)\Big)\\
  \le 2\varepsilon t+ t^{\frac32}k^{\frac12}\lVert\bar v_1-\bar v_2\rVert_\infty^{\frac32}
       +\frac{2C}{\min\rho} \frac1{k^{1+\beta}}\\
  = \mathrlap{2\varepsilon t 
  + O\big(t^{\frac{4+3\beta}{4+\beta}}\big)+O\big(t^{\frac{2+2\beta}{2+\beta}}\big)=o(t).}
   \phantom{2\varepsilon t+ t^{\frac32}k^{\frac12}\lVert\bar v_1-\bar v_2\rVert_\infty^{\frac32}
       +\frac{2C}{\min\rho} \frac1{k^{1+\beta}}}
\end{multline*}
\end{proof}

\begin{proof}[Proof of Theorem \ref{theo:differential}]
Recall that 
\begin{align*}
\Phi_{d\#}(\lambda+tv)=&\frac1d\big(\lambda+dt\,v(\cdot/d)\big)
  +\frac1d\big(\lambda+dt\,v((\cdot+1)/d)\big)\\
& +\dots+ \frac1d\big(\lambda+dt\,v((\cdot+d-1)/d)\big)
\end{align*}
and apply the preceding lemma.
\end{proof}

Let us prove that we cannot hope for the Fr\'echet differentiability of 
$\Phi_{d\#}$. We only treat the case $d=2$ for simplicity.

\begin{prop}
For all positive $\varepsilon$, there is a vector field
$v\in L^2_0$ that satisfies the following:
\begin{enumerate}
\item\label{enumi:a} $|v|_2\leqslant \varepsilon$,
\item\label{enumi:b}  $\mathscr{L}_2 v=0$ so that $\lambda+\mathscr{L}_2 v=\lambda$, and
\item\label{enumi:c} $\wassd\left(\Phi_{2\#}(\lambda+v),\lambda\right)\geqslant c\varepsilon$
\end{enumerate}
for some constant $c$ independent of $\varepsilon$ and $v$.
\end{prop}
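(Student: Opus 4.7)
The idea is to exploit the fact that the Perron–Frobenius operator averages over preimages, so a vector field $v$ concentrated on just one preimage of each point can lie in the kernel of $\mathscr{L}_2$ while still producing, after push-forward, a non-negligible defect from $\lambda$. The key pair of parameters is a small height $M$ (the displacement created by $v$) and an even smaller width $\delta$ (the size of its support): the $L^2_0$-norm will be $M\sqrt{2\delta}$, while the Wasserstein cost of the push-forward will be driven by mass of order $\delta$ having to travel a distance of order $M$, yielding the same order of magnitude $M\sqrt{\delta}$ rather than $o(M\sqrt{\delta})$.

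\textbf{Construction.} Fix parameters $0<2\delta<M<1/4$ coupled by the relation $M\sqrt{2\delta}=\varepsilon$ (e.g.\ $M=\delta^{1/2}$, so both tend to $0$ with $\varepsilon$), and set
\[ v \;=\; M\,\mathbf{1}_{[0,\delta)} \;-\; M\,\mathbf{1}_{[1/2,\,1/2+\delta)}. \]
Item~\ref{enumi:a} is immediate: $\int v\,d\lambda=0$ and $|v|_2=M\sqrt{2\delta}=\varepsilon$, so $v\in L^2_0$. Item~\ref{enumi:b} follows from a direct two-line check: for $y\in[0,2\delta)$, the two terms $v(y/2)$ and $v((y+1)/2)$ are exactly $M$ and $-M$; for every other $y$ they both vanish. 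Hence $\mathscr L_2 v\equiv 0$ and $\lambda+\mathscr L_2 v=\lambda$.

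\textbf{Computing the push-forward.} Since $\mathrm{Id}+v$ is injective and $\lambda+v=(\mathrm{Id}+v)_\#\lambda$, the measure $\lambda+v$ is absolutely continuous with density
\[ \rho_v \;=\; 1 - \mathbf{1}_{[0,\delta)} - \mathbf{1}_{[1/2,\,1/2+\delta)} + \mathbf{1}_{[M,M+\delta)} + \mathbf{1}_{[1/2-M,\,1/2-M+\delta)}, \]
the disjointness of these four intervals being ensured by $2\delta<M<1/4$. Applying the covering formula
\[ \frac{d\bigl(\Phi_{2\#}(\rho_v\lambda)\bigr)}{d\lambda}(y) \;=\; \tfrac12\bigl[\rho_v(y/2)+\rho_v((y+1)/2)\bigr], \]
one finds that $\Phi_{2\#}(\lambda+v)$ coincides with $\lambda$ except that it has density $0$ on $[0,2\delta)$ and density $3/2$ on each of the two intervals $[2M,2M+2\delta)$ and $[1-2M,1-2M+2\delta)$. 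So there is a deficit of mass $2\delta$ on $[0,2\delta)$ and two surpluses of mass $\delta$ at distance $2M-2\delta$ from it (on the circle).

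\textbf{Lower bound on the Wasserstein distance.} Any transport plan between $\Phi_{2\#}(\lambda+v)$ and $\lambda$ must move a total mass of at least $2\delta$ from the surplus regions $[2M,2M+2\delta)\cup[1-2M,1-2M+2\delta)$ to the deficit region $[0,2\delta)$, and each unit of such mass travels at least a distance $2M-2\delta$. Therefore
\[ \wassd^2\bigl(\Phi_{2\#}(\lambda+v),\lambda\bigr) \;\geqslant\; 2\delta\,(2M-2\delta)^2, \]
which yields $\wassd(\Phi_{2\#}(\lambda+v),\lambda)\geqslant 2M\sqrt{2\delta}\,(1-\delta/M)=2\varepsilon(1-o(1))$ as $\varepsilon\to 0$ (with our coupling). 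In particular item~\ref{enumi:c} holds with any constant $c<2$, proving the proposition.

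The only real subtlety is bookkeeping: one must check the four support intervals of $\rho_v-1$ are disjoint, that the two surplus intervals of $\Phi_{2\#}(\lambda+v)-\lambda$ are disjoint from the deficit interval, and that one is using the circle distance correctly when estimating $2M-2\delta$. None of these raise any actual obstacle once $2\delta<M<1/4$ is enforced.
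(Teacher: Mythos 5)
The step that fails is your Wasserstein lower bound. Nothing forces a transport plan from $\Phi_{2\#}(\lambda+v)$ to $\lambda$ to ship mass from the surplus intervals $[2M,2M+2\delta)\cup[1-2M,1-2M+2\delta)$ into the deficit interval $[0,2\delta)$: the mass that $\lambda$ puts on $[0,2\delta)$ can be supplied by the unit-density mass sitting immediately next to it, with everything in between shifting by $O(\delta)$ (a ``bucket brigade''). Keeping the common mass in place is a device for upper bounds, not a constraint on arbitrary plans. Concretely, writing $\mu'=\Phi_{2\#}(\lambda+v)$ and $F_{\mu'}$ for its cumulative distribution function, the map $S(x)=F_{\mu'}(x)+\delta \bmod 1$ pushes $\mu'$ forward to $\lambda$, moves no point farther than $\delta$, and moves only the points of a set of $\mu'$-measure at most $6M+3\delta$ (outside $[0,2M+2\delta)\cup[1-2M,1)$ one has $F_{\mu'}(x)=x-\delta$, so $S=\Id$ there). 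Hence
\[
\wassd\bigl(\Phi_{2\#}(\lambda+v),\lambda\bigr)\;\leqslant\;\delta\sqrt{6M+3\delta}\;\leqslant\;3\delta\sqrt{M},
\]
while $\varepsilon=|v|_2=M\sqrt{2\delta}$. With your coupling $M=\sqrt{\delta}$ this gives $\wassd\leqslant 3M^{5/2}=o(M^2)=o(\varepsilon)$, so item~\ref{enumi:c} actually \emph{fails} for your $v$, for every fixed $c>0$, once $\varepsilon$ is small. Your inequality $\wassd^2\geqslant 2\delta(2M-2\delta)^2$ is therefore not just unproved but false.

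The reason the paper's example survives this kind of collective-shift transport is structural: there $v$ has slope $\pm1$ on intervals of length $1/(2k)$, so $\lambda+v$ (and then $\Phi_{2\#}(\lambda+v)$) develops \emph{atoms} of mass $1/(2k)$ sitting at the centers of disjoint intervals $I_i$ with $\lambda(I_i)=1/(4k)$. Since $\lambda(I_i)$ is smaller than the atom's mass, at least $1/(4k)$ of each atom must leave $I_i$ under \emph{any} plan, hence travels at least $1/(8k)$; summed over $i$ this is a localized obstruction that no bucket brigade can circumvent, giving $\wassd\geqslant 1/(16k)$ while $|v|_2\asymp 1/k$. In short, what defeats Fréchet differentiability is concentration of mass caused by a derivative of $v$ of order $-1$ (small $|v|_2$ but large total variation), not a small translated bump: a bump of height $M$ on a support of size $\delta$ merely displaces mass, and its effect can be undone at cost of order $\delta\sqrt{M}$, which is negligible compared with $|v|_2$ in your regime. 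If you want to keep an indicator-type construction you would at least need $\delta\asymp M$ and a genuinely different lower-bound argument (e.g. a circle analogue of the inverse-distribution-function formula for $\wassd$), since the ``surplus must feed the deficit directly'' reasoning is invalid in any regime.
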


\begin{proof}
Let $k$ be a positive integer, to be precised later on. Let $v$ be the
piecewise affine map defined as follows (see figure \ref{fig:example}): 
$v(x)=1/(4k)-y$ when  $x=i/(2k) + y$ with $y\in[0,1/(2k))$
and $0\leqslant i<k$ an integer, and $v(x)=-1/(4k)+y$ when 
$x=i/(2k) + y$ with $y\in[0,1/(2k))$ and $k\leqslant i<2k$.
We have $|v|_2^2=(4k)^{-2}/3$ so that taking 
$k\geqslant \frac{\sqrt{3}}4\varepsilon^{-1}$ ensures point \ref{enumi:a}.
Moreover, \ref{enumi:b} is straightforward, and we are left to prove that
if $k$ is of order $\varepsilon^{-1}$, then property \ref{enumi:c} holds.

On any small enough interval $I$, if $w$ is an affine function of slope $-1$
with a zero at the center of $I$, then $\lambda_{|I}+w$ is a Dirac mass at
the center of $I$ (each element of mass is moved to the center). If $w$
has slope $1$, then the mass moves in the other direction, and $\lambda_{|I}+w$
is uniform of density $1/2$ on the interval $I'$ having the same center than
$I$ and twice as long. By combining these two observations, one deduces that
\[\mu:=\Phi_{2\#}(\lambda+v)=1/2\lambda
  +\sum_{i=1}^k \frac1{2k}\delta_{\frac{i-1/2}{k}}.\] 

\begin{figure}[htp]\begin{center}
\includegraphics{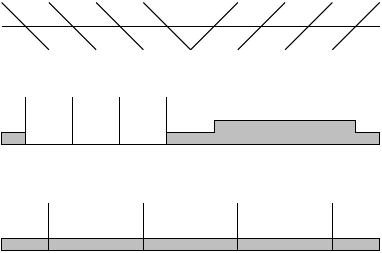}
\caption{The case $k=4$. Up: the graph of $v$; middle: $\lambda+v$; down:
$\Phi_\#(\lambda+v)$.}\label{fig:example}
\end{center}\end{figure}

Each interval of the form $I_i=[(i-5/8)/k,(i-3/8)/k)$ is given by $\lambda$
a mass $1/(4k)$. The discrete part of $\mu$ consists in a Dirac mass of weight 
$1/(2k)$ at the center of each $I_i$. Any transport plan from $\mu$ to $\lambda$
must therefore move a mass at least
$1/(4k)$ from each of these Dirac masses to the outside of $I_i$, so that
a total mass at least $1/4$ has to move a distance at least $1/(8k)$. From this
it follows that $\wassd(\lambda,\mu)\geqslant 1/(16k)$. When $k$ is chosen with the
order of $\varepsilon^{-1}$, this distance has at least the order of 
$\varepsilon$, as required. 
\end{proof}

%%%%%%%%%%%%%%%%%%%%%%%%%%%%%%%%%%%%%%%%%%%%%%%%%%%%%%%%%%%%%%%%%%%%%%%%%%%%%%%
\subsection{Spectral study of $\mathscr{L}_d$}

Let us compute the spectrum of 
$\mathscr{L}_d=D_\lambda(\Phi_{d\#})$.  The following proposition is very 
elementary and 
not new, but we produce a proof for the sake of completeness.

\begin{prop}\label{prop:spec}
A number $\alpha$ is an eigenvalue of $\mathscr{L}_d$ if and only if
$|\alpha|<d$. Moreover, each eigenvalue has an infinite-dimensional eigen\-space.
Last, the spectrum of $\mathscr{L}_d$ is the closed disc of radius $d$.
\end{prop}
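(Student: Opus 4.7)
The natural approach is to diagonalize the block structure of $\mathscr{L}_d$ using the complex Fourier basis $\{e_n\}_{n\in\mathbb{Z}\setminus\{0\}}$ of (the complexification of) $L^2_0$, where $e_n(x)=e^{2\pi i nx}$. From the identity $\sum_{j=0}^{d-1}e^{2\pi i nj/d}=d\cdot\mathbf{1}_{d\mid n}$ one computes
\[\mathscr{L}_d e_n=\begin{cases}d\,e_{n/d}&\text{if }d\mid n,\\0&\text{otherwise.}\end{cases}\]
Partition the nonzero integers by multiplication-by-$d$ orbits: for each $m\neq 0$ with $d\nmid m$, set $V_m=\overline{\operatorname{span}}\{e_{d^k m}:k\geqslant 0\}$. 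Each $V_m$ is closed and $\mathscr{L}_d$-invariant, the $V_m$ are pairwise orthogonal, and $L^2_0=\bigoplus_m V_m$. Under the unitary isomorphism $V_m\cong\ell^2(\mathbb{N})$ sending $e_{d^k m}$ to the $k$-th basis vector, $\mathscr{L}_d|_{V_m}$ becomes $d\cdot S$, where $S$ is the backward shift $S(a_0,a_1,\ldots)=(a_1,a_2,\ldots)$.

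The spectral analysis then reduces to the familiar one for $d\cdot S$ on $\ell^2(\mathbb{N})$. For any $|\alpha|<d$, the sequence $\bigl((\alpha/d)^k\bigr)_{k\geqslant 0}$ lies in $\ell^2(\mathbb{N})$ and is an eigenvector of $d\cdot S$ with eigenvalue $\alpha$; this produces an independent $\alpha$-eigenvector of $\mathscr{L}_d$ in each of the infinitely many spaces $V_m$, so the $\alpha$-eigenspace of $\mathscr{L}_d$ is infinite-dimensional. Conversely, if $\mathscr{L}_d v=\alpha v$ with $v=\sum c_n e_n\neq 0$, comparing Fourier coefficients gives the recurrence $c_{dn}=(\alpha/d)c_n$, whence $c_{d^j n}=(\alpha/d)^j c_n$ for every $n\neq 0$ and every $j\geqslant 0$; picking some $n$ with $c_n\neq 0$, the $\ell^2$-summability of the tail $(c_{d^j n})_{j\geqslant 0}$ forces $|\alpha|<d$. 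So the point spectrum is exactly the open disc of radius $d$.

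For the full spectrum, each restriction $\mathscr{L}_d|_{V_m}=d\cdot S$ has norm $d$, and the orthogonality of the $V_m$ yields $\|\mathscr{L}_d\|=d$. Hence $\sigma(\mathscr{L}_d)$ is contained in the closed disc of radius $d$; being closed and already containing the open disc (as point spectrum), it must equal the closed disc of radius $d$.

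I expect no serious obstacle: once the Fourier basis exhibits the block decomposition into shift-like pieces, everything follows from the textbook spectrum of the one-sided backward shift on $\ell^2(\mathbb{N})$. The one mild point requiring care is to work in $L^2_0$ rather than $L^2$: the constant function is an additional eigenvector of $\mathscr{L}_d$ with eigenvalue $d$ living outside $L^2_0$, and removing it is precisely what excises the boundary point $\alpha=d$ from the point spectrum, leaving the clean statement above.
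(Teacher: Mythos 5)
Your proof is correct and follows essentially the same route as the paper: both conjugate $\frac1d\mathscr{L}_d$ via Fourier analysis to a direct sum of copies of the one-sided backward shift on $\ell^2(\mathbb{N})$ (you via complex exponentials and the orbit spaces $V_m$, the paper via an explicit isometry built from the real basis $c_k,s_k$) and then quote the standard spectral picture of the shift. Note only that the ``radius $2$'' in the statement is a typo for radius $d$ (compare Theorem \ref{theo:diff}), so your conclusion $\sigma(\mathscr{L}_d)=\overline{D}(0,d)$ is exactly what is intended.
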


The proof of Proposition \ref{prop:spec} consist simply in using Fourier series
to show that (up to a multiplicative constant) $\mathscr{L}_d$ is conjugated to a 
countable product of the shift on $\ell^2(\mathbb{N})$.

\begin{proof}
Let $c_k$ denote the function $x\mapsto \cos(2\pi k x)$ defined on the
circle, and $s_k:x\mapsto \sin(2\pi k x)$. Then it is readily checked that
$\mathscr{L}_d\, c_k=\mathscr{L}_d\,s_k=0$ when $d$ does not divide $k$, and 
$\mathscr{L}_d\, c_k=dc_{k/d}$, $\mathscr{L}_d\,s_k=d s_{k/d}$ when $d|k$.

Let $\sigma$ be the shift of the Hilbert space $\ell^2=\ell^2(\mathbb{N})$ of 
$\mathbb{N}$-indexed
square integrable sequences: if $\underline{x}=(x_0,x_1,x_2,\ldots)$ then
$\sigma\underline{x}=(x_1,x_2,x_3,\ldots)$. Let $\sigma^{\mathbb{N}}$
be the direct product of $\sigma$, acting diagonaly on the space $(\ell^2)^{(\mathbb{N})}$
of sequences $X=(\underline{x}^0,\underline{x}^1,\underline{x}^2,\ldots)$ such that
$\underline{x}^i\in\ell^2$ and $\sum |\underline{x}^i|_2^2<\infty$.
Then the map $\Psi : (\ell^2)^{(\mathbb{N})} \to L^2_0$ defined by
\begin{align*}
\Psi(X)= &\sum_{i,j\in\mathbb{N}} x_j^{2(d-1)i} c_{(di+1)d^j} 
  + x_j^{2(d-1)i+1} c_{(di+2)d^j}\\
  & \qquad+\dots+x_j^{2(d-1)i+d-2} c_{(di+d-1)d^j} \\
  & + x_j^{2(d-1)i+d-1} s_{(di+1)d^j}
  +x_j^{2(d-1)i+d} s_{(di+2)d^j} \\
  & \qquad+\dots+x_j^{2(d-1)i+2d-3} s_{(di+d-1)d^j}
\end{align*}
is an isomorphism (and even an isometry) that intertwins $\sigma^{\mathbb{N}}$
and $\frac1d\mathscr{L}_d$. The spectral study of $\mathscr{L}_d$ therefore reduces
to that of $\sigma$.

A non-zero eigenvector of $\sigma$, associated to an eigenvalue $\alpha$,
must have the form $(x,\alpha x,\alpha^2 x,\ldots)$ with $x\neq 0$. Such a
sequence is square integrable if and only if $|\alpha|<1$. Moreover
the operator norm of $\sigma$ is $1$, so that its complex spectrum is a subset
of the closed unit disc. Since the spectrum is closed, and contains
the set of eigenvalues, it is equal to the closed unit disc.
\end{proof}

%%%%%%%%%%%%%%%%%%%%%%%%%%%%%%%%%%%%%%%%%%%%%%%%%%%%%%%%%%%%%%%%%%%%%%%%%%%%%%%%%%%%%%%%%%%
\subsection{Discussion of the non-Fr\'echet differentiability}

The coun\-ter-example to the Fr\'echet differentiability of $\Phi_\#$
at $\lambda$ has high total variation, and it is likely that using a norm
that controls variations (e.g. a Sobolev norm) on (a subspace of) $T_\lambda$
shall provide a uniform error bound. 

Moreover, up to multiplication by $d$ the derivative $\mathscr{L}_d$ is 
the Perron-Frobenius operator of $\Phi_d$,
and such operators have
far more subtle spectral properties when defined over Sobolev spaces.

For these two reasons, it seems that one could search for a modification of
optimal transport that would give a manifold structure to $\measures$,
 in such a way that $T_\lambda$ identifies with a Sobolev space. A way to 
achieve this could be to penalize not only the distance by which a transport
plan moves mass, but also the distorsion, that is the variation of the pairwise
distances of the elements of mass. This should impose more regularity to optimal
transport plans.

%%%%%%%%%%%%%%%%%%%%%%%%%%%%%%%%%%%%%%%%%%%%%%%%%%%%%%%%%%%%%%%%%%%%%%%%%%%%%%%%
%%%%%%%%%%%%%%%%%%%%%%%%%%%%%%%%%%%%%%%%%%%%%%%%%%%%%%%%%%%%%%%%%%%%%%%%%%%%%%%%
%%%%%%%%%%%%%%%%%%%%%%%%%%%%%%%%%%%%%%%%%%%%%%%%%%%%%%%%%%%%%%%%%%%%%%%%%%%%%%%%
\section{First-order dynamics for general expanding maps}\label{sec:general}

In this section, we consider a general map $\Phi:\mathbb{S}^1\to\mathbb{S}^1$,
assumed to be $C^2$ and expanding, {\it i.e.} $|\Phi'|>1$. Such a map is a self-covering,
and has a unique absolutely continuous invariant measure 
(see e.g. \cite{Katok-Hasselblatt})
which has a positive and $C^1$ density \cite{Krzyzewski}, denoted by $\rho$. The measure itself is denoted
by $\rho\lambda$.
Note that as sets, $L^2(\rho\lambda)=L^2$, although they differ as Hilbert spaces.
All integrals where the variable is implicit are with respect to the Lebesgue measure $\lambda$.

The result is as follows.
\begin{theo}\label{theo:expanding}
The map $\Phi_\#$ has a G\^ateaux derivative
$\mathscr{L} : L^2_0(\rho\lambda) \to  L^2_0(\rho\lambda)$ at $\rho\lambda$,
given by
\[\mathscr{L}v(x) = \sum_{y\in\Phi^{-1}(x)} \frac{\rho(y)}{\rho(x)} v(y)
              -\frac{\int{v\Phi'\frac{\rho}{\rho\circ\Phi}}}{\rho(x)\int1/\rho}
\]
Moreover the adjoint operator of $\mathscr{L}$ in $L^2_0(\rho\lambda)$
is given by
\[\mathscr{L}^* u = \Phi'\, u\circ\Phi.\]
\end{theo}

%%%%%%%%%%%%%%%%%%%%%%%%%%%%%%%%%%%%%%%%%%%%%%%%%%%%%%%%%%%%%%%%%%%%%%%%%%%%%%%%
\subsection{Proof of Theorem \ref{theo:expanding}}

First, as in the case of $\Phi_{d\#}$, Lemma \ref{lemm:composition} shows that
for $v\in L^2_0(\rho\lambda)$,
\begin{equation}
d\left(\Phi_\#(\rho\lambda+tv), \rho\lambda+t\tilde{\mathscr{L}}v\right)=o(t)
\label{eq:tildoperator}
\end{equation}
where 
\[\tilde{\mathscr{L}}v(x) = \sum_{y\in\Phi^{-1}(x)} \frac{\rho(y)}{\rho(x)} v(y)\]
is the first term in the expression of $\mathscr{L}$.
In words, each of the antecedents of $x$ gives a contribution to the local displacement
of mass that is proportional to $v(y)$ and to $\rho(y)$.

This seems very similar to the case of $\Phi_{d\#}$, except that $\tilde{\mathscr{L}}$ need
not to map $L^2_0(\rho\lambda)$ to itself! Let us stress, once again, that the condition
that $v\in L^2_0(\rho\lambda)$ has mean zero is to be understood \emph{with respect to
the uniform measure} $\lambda$, since it translates the \emph{metric} property of being (close to)
the gradient of a smooth function. This does not prevent Equation \eqref{eq:tildoperator}
to make sense, but shows that $\tilde{\mathscr{L}}v$ cannot be considered as the
directional derivative of $\Phi_\#$ since it does not belong to $T_{\rho\lambda}=L^2_0(\rho\lambda)$.
In fact, we shall see that there is another vector field, that lies in $L^2_0(\rho\lambda)$ and
gives the same pushed measure (at least at order $1$).

\begin{prop}\label{prop:centering}
Given $\tilde w\in L^2(\rho\lambda)$ and assuming that $\tilde w$ is $C^1$, there 
is a $C^1$
vector field $w\in L^2_0(\rho\lambda)$ such that
$\wassd(\rho\lambda+t\tilde w,\rho\lambda+tw)=o(t)$. Moreover, $w$ is given by
\[w=\tilde w+\frac{\int \tilde w}{\rho\int 1/\rho}.\]
\end{prop}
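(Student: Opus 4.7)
The plan is to observe that the correction $v_0 := w - \tilde w$ has the special form $v_0(x) = c_0/\rho(x)$ for a single constant $c_0$, and that adding such a ``null'' vector field does not affect the push-forward measure to first order in $t$. The constant $c_0$ is forced by the requirement $\int w \, d\lambda = 0$, which makes $w$ belong to $L^2_0(\rho\lambda)$; one reads from the stated formula that $c_0 \int 1/\rho = \pm\int \tilde w$, the sign being what is needed for centering. The decisive feature is the identity $\rho v_0 \equiv c_0$, i.e.\ $(\rho v_0)' \equiv 0$.

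The next step is to apply Proposition \ref{prop:density} to each of $\tilde w$ and $w$, both of which are $C^1$. For $t$ small enough, $\rho\lambda + t\tilde w$ and $\rho\lambda + tw$ are absolutely continuous with continuous densities
\[\rho_t = \rho - t(\rho\tilde w)' + o(t), \qquad \rho'_t = \rho - t(\rho w)' + o(t),\]
where the remainders are uniform in $x$. Since $(\rho w)' = (\rho\tilde w)' + c_0' = (\rho\tilde w)'$, the two densities agree up to a uniform $o(t)$: $|\rho_t - \rho'_t|_\infty = o(t)$.

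It remains to convert this $L^\infty$-closeness of densities into a $\wassd$-closeness of measures with the sharp rate $o(t)$. Since $\rho$ is positive and continuous on the compact circle, for $t$ small both $\rho_t$ and $\rho'_t$ are bounded below by a constant $c>0$. One then uses the one-dimensional structure: pick a point $x_* \in \mathbb{S}^1$, identify $\mathbb{S}^1$ with $[x_*, x_*+1]$, and consider the cumulative distribution functions $F_t(x) = \int_{x_*}^x \rho_t$ and $F'_t(x) = \int_{x_*}^x \rho'_t$; both equal $1$ at $x_*+1$. The standard 1D quantile formula on the interval, combined with the lower bound $\rho_t, \rho'_t \geq c$ (which forces $|F_t^{-1} - F_t'^{-1}| \leq c^{-1}|F_t - F'_t|_\infty$), yields
\[\wassd(\rho_t\lambda, \rho'_t\lambda)^2 \leq \int_0^1 |F_t^{-1} - F_t'^{-1}|^2 \leq c^{-2}|F_t - F'_t|_\infty^2 \leq c^{-2}|\rho_t - \rho'_t|_\infty^2 = o(t^2),\]
which gives the desired bound $\wassd(\rho\lambda + t\tilde w, \rho\lambda + tw) = o(t)$.

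The main obstacle is the last step: the naive total-variation bound $\wassd_2 \lesssim \|\rho_t - \rho'_t\|_1^{1/2}$ would only give an $O(\sqrt{t})$ estimate, insufficient for a G\^ateaux derivative argument. The improvement to $o(t)$ relies crucially on the positive lower bound on the densities and on working in one dimension via quantile functions; the transport plan $((\Id + t\tilde w) \times (\Id + tw))_\# \rho\lambda$ would give only $O(t)$ and must be avoided.
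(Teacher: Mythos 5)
Your proof is correct and follows the same skeleton as the paper's argument: choose $w$ so that $(\rho w)'=(\rho\tilde w)'$ and $\int w\,d\lambda=0$, apply Proposition \ref{prop:density} to both $C^1$ fields, and conclude from the uniform $o(t)$ closeness of the two densities. Two remarks. First, the sign: the ``$+$'' in the stated formula is a typo, and your ``$\pm$, whichever centers'' hedge resolves it correctly --- with ``$+$'' one gets $\int w\,d\lambda=2\int\tilde w$, whereas the mean-zero (hence $L^2_0(\rho\lambda)$) choice is $w=\tilde w-\frac{\int\tilde w}{\rho\int 1/\rho}$, consistent with the centering operator $\mathscr{C}$ introduced just after the proposition. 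Second, and more substantially, your final step is a genuine strengthening of the paper's one-line conclusion $\wassd(\rho\lambda+t\tilde w,\rho\lambda+tw)\leqslant|\rho_t-\tilde\rho_t|=o(t)$: a bound of the form $\wassd_2\leqslant\|\rho_t-\tilde\rho_t\|_1$ (or with the $L^\infty$ norm) is not a valid inequality for arbitrary pairs of densities, and the crude total-variation estimate used elsewhere in the paper only gives $\wassd_2\leqslant C\|\rho_t-\tilde\rho_t\|_1^{1/2}=o(\sqrt t)$, which is insufficient, exactly as you say. The missing ingredient is the uniform lower bound $\rho_t,\tilde\rho_t\geqslant c>0$ for small $t$ (available here since both densities converge uniformly to the positive continuous $\rho$), which your quantile/monotone-rearrangement argument exploits to get $\wassd_2\leqslant c^{-1}\|F_t-\tilde F_t\|_\infty\leqslant c^{-1}\|\rho_t-\tilde\rho_t\|_\infty=o(t)$; this is precisely the justification the paper leaves implicit. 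Your closing observation that the coupling $((\Id+t\tilde w)\times(\Id+tw))_\#\rho\lambda$ only yields $O(t)$ is also the right explanation of why the correction term cannot be handled by the naive transport plan.
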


\begin{proof}
This is a direct application of Proposition \ref{prop:density}: 
we search for a $w$ such that $(\rho w)'=(\rho\tilde w)'$,
so that the densities $\rho_t$ and $\tilde\rho_t$ of $\rho\lambda+tw$
and $\rho\lambda+t\tilde w$ are $L^\infty$ and therefore $L^1$ close
one to the other. This ensures that 
$\wassd(\rho\lambda+t\tilde w,\rho\lambda+tw)\leqslant |\rho_t-\tilde\rho_t|=o(t)$.

But there exists exactly one vector field $w$ that is $C^1$, 
has mean zero, and such that
$(\rho w)'=(\rho\tilde w)'$: it is given by the claimed formula.
\end{proof}

Note that we did not bother to prove the unicity of $w$: Gigli's construction
shows that the first order
perturbation of the measure (with respect to the $L^2$ Wasserstein metric)
characterizes a tangent vector 
in $T_\mu$, see Theorem 5.5 in \cite{Gigli}.

Now if one considers the ``centering'' operator 
$\mathscr{C}:L^2(\rho\lambda)\to L^2_0(\rho\lambda)$
defined by 
\[\mathscr{C} v= v-\frac{\int v}{\rho\int 1/\rho},\]
the derivative of $\Phi_\#$ at $\rho\lambda$ is given by the composition 
$\mathscr{C}\tilde{\mathscr{L}}$.
Indeed, the previous proposition shows this for a $C^1$ argument, but 
$C^1$ vector fields are dense
in $L^2_0(\rho\lambda)$ and the involved operators are continuous
in the $L^2(\rho\lambda)$ topology.

To get the expression of $\mathscr{L}$ given in Theorem \ref{theo:expanding}, one 
only needs a change of variable:
denoting by $\Phi_i^{-1}$ ($i=1,2,\ldots, d$) the right inverses to $\Phi$ 
that are onto intervals $[a_1=0,a_2), [a_2,a_3), \ldots,
[a_d,a_{d+1}=1)$ one has
\begin{eqnarray*}
\int\tilde{\mathscr{L}}v 
  &=& \sum_i \int \frac{\rho\circ\Phi_i^{-1}}{\rho} v\circ\Phi_i^{-1} \\
  &=& \sum_i \int_{a_i}^{a_{i+1}} \frac{\rho}{\rho\circ\Phi} v \Phi' \\
  &=& \int  v \Phi'\frac{\rho}{\rho\circ\Phi}.
\end{eqnarray*}

The computation of the adjoint is a similar change of variable that we omit. 
Note that the adjoint
of the extension to $L^2(\rho\lambda)$ of $\mathscr{L}$ (with the same expression) is
\[u\mapsto \Phi'\, u\circ\Phi - \frac{\Phi'\int u}{\rho\circ\Phi \int{1/\rho}}\]
and the second term vanishes when $u$ is in $L^2_0(\rho\lambda)$. 
The first term is also the adjoint in $L^2(\rho\lambda)$
of $\tilde{\mathscr{L}}$, and this adjoint preserves $L^2_0(\rho\lambda)$. 
In other words,
$\mathscr{L}$ is the adjoint in $L^2_0(\rho\lambda)$ of the adjoint in
$L^2(\rho\lambda)$ of $\tilde{\mathscr{L}}$.
An interesting feature of the expression of $\mathscr{L}^*$ is that it does not 
involve the invariant measure.

%%%%%%%%%%%%%%%%%%%%%%%%%%%%%%%%%%%%%%%%%%%%%%%%%%%%%%%%%%%%%%%%%%%%%%%%%%%%%%%%
\subsection{Spectral study}

Even if $\mathscr{L}$ is not a multiple of the Perron-Frobenius operator
of $\Phi$, its first term $\tilde{\mathscr{L}}$
is a weighted transfert operator, with weight
$g=\frac{\rho}{\rho\circ\Phi}$. According to Theorem 2.5 in \cite{Baladi},
every number of modulus less than $R_g=\lim_n(\sup\tilde{\mathscr{L}}^n1)^{1/n}$
is an eigenvalue of infinite multiplicity with continuous eigenfunctions.

\begin{prop}
We have $R_g\geqslant \min \Phi'>1$, and therefore
there is an infinite linearly independent family $(v_i)_i$
of continuous functions in $L^2_0(\rho\lambda)$ such
that $\mathscr{L}v_i=v_i$.
\end{prop}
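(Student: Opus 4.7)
The plan proceeds in two independent steps: first I would establish the lower bound $R_g\geq \min\Phi'$ by a direct iteration using the invariance of $\rho\lambda$, and then I would extract an infinite family of fixed vectors of $\mathscr{L}$ from those of $\tilde{\mathscr{L}}$ supplied by Baladi's theorem, by imposing the single linear condition that accounts for the centering operator $\mathscr{C}$.

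For the lower bound on $R_g$, I would iterate $\tilde{\mathscr{L}}$ directly. The multiplicative cocycle $\prod_{k=0}^{n-1} g(\Phi^k y)$ with weight $g=\rho/(\rho\circ\Phi)$ telescopes to $\rho(y)/\rho(\Phi^n y)$, so that
\[
\tilde{\mathscr{L}}^n 1(x) \;=\; \frac{1}{\rho(x)} \sum_{y\in\Phi^{-n}(x)} \rho(y).
\]
The invariance of $\rho\lambda$ under $\Phi^n$ is precisely the pointwise Perron--Frobenius identity $\sum_{y\in\Phi^{-n}(x)} \rho(y)/(\Phi^n)'(y) = \rho(x)$. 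Since $(\Phi^n)'(y)\geq (\min\Phi')^n$ everywhere by the chain rule, this identity upgrades to $\sum_{y\in\Phi^{-n}(x)} \rho(y) \geq (\min\Phi')^n\,\rho(x)$, whence $\tilde{\mathscr{L}}^n 1 \geq (\min\Phi')^n$ pointwise and, taking $n$-th roots of the supremum, $R_g \geq \min\Phi' > 1$.

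With this bound, $1$ lies strictly inside the disc of radius $R_g$, so Baladi's Theorem~2.5 furnishes an infinite-dimensional subspace $E$ of continuous functions $u$ satisfying $\tilde{\mathscr{L}} u = u$. To pass from $\tilde{\mathscr{L}}$ to $\mathscr{L} = \mathscr{C}\tilde{\mathscr{L}}$, I would observe that for such $u$ one has
\[
\mathscr{L} u \;=\; \mathscr{C} u \;=\; u \;-\; \frac{\int u}{\rho\int 1/\rho},
\]
so $\mathscr{L} u = u$ if and only if $\int u\,d\lambda = 0$. The hyperplane $E_0\subset E$ cut out by this single linear condition is still infinite-dimensional, and each of its elements is a continuous function of zero Lebesgue mean, hence lies in $L^2_0(\rho\lambda)$ by the identification recalled in Section~3. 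Any linearly independent sequence $(v_i)_i$ in $E_0$ then provides the required family. The only nontrivial step is the pointwise lower bound $\tilde{\mathscr{L}}^n 1 \geq (\min\Phi')^n$; everything else is either a direct appeal to Baladi or a codimension-one reduction.
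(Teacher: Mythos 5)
Your proposal is correct and follows essentially the same route as the paper: a pointwise lower bound $\tilde{\mathscr{L}}^n 1\geqslant (\min\Phi')^n$ obtained from the invariance identity $\sum_{y\in\Phi^{-n}(x)}\rho(y)/(\Phi^n)'(y)=\rho(x)$ (the paper states the one-step bound and iterates, you telescope the cocycle directly), followed by Baladi's theorem and a codimension-one zero-$\lambda$-mean reduction to turn $\tilde{\mathscr{L}}$-eigenfunctions into fixed vectors of $\mathscr{L}=\mathscr{C}\tilde{\mathscr{L}}$ (the paper subtracts multiples of one eigenfunction, which is the same hyperplane argument).
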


\begin{proof}
Let $m=\min \Phi'$: we have $m>1$ and, since $\rho\lambda$ is
invariant,
\[\rho(x)=\sum_{y\in\Phi^{-1}(x)}\frac{\rho(y)}{\Phi'(y)}
  \leqslant \frac1m\sum_{y\in\Phi^{-1}(x)} \rho(y) \]
It follows that for all positive continuous function $f$,
\[\tilde{\mathscr{L}}f=\sum_{y\in\Phi^{-1}(x)}\frac{\rho(y)}{\rho(x)}f(y)
  \geqslant m|\inf f|;\]
in particular, $R_g\geqslant m>1$ and there is a linearly independent
infinite family $u_0,u_1,\ldots,u_i\ldots$
of continuous $1$-eigenfunctions of $\tilde{\mathscr{L}}$.
If not all $u_i$ have mean $0$ (with respect to Lebesgue's measure
$\lambda$), assume the mean of $u_0$ is not zero and
let $v_i=u_i-\alpha_i u_0$ where $\alpha_i$ is chosen such that
$\int v_i\lambda=0$. Otherwise, simply put $v_i=u_i$.

Now, since $\tilde{\mathscr{L}}v_i=v_i$ and $v_i$ has mean zero,
we get $\mathscr{L} v_i=\tilde{\mathscr{L}}v_i=v_i$.
\end{proof}

In the same way, we see that all numbers less than $m>1$ are eigenvalues
of $\mathscr{L}$ (with infinite multiplicity and continuous eigenfunctions).

%%%%%%%%%%%%%%%%%%%%%%%%%%%%%%%%%%%%%%%%%%%%%%%%%%%%%%%%%%%%%%%%%%%%%%%%%%%%%%%%
%%%%%%%%%%%%%%%%%%%%%%%%%%%%%%%%%%%%%%%%%%%%%%%%%%%%%%%%%%%%%%%%%%%%%%%%%%%%%%%%
%%%%%%%%%%%%%%%%%%%%%%%%%%%%%%%%%%%%%%%%%%%%%%%%%%%%%%%%%%%%%%%%%%%%%%%%%%%%%%%%
\section{Nearly invariant measures}\label{sec:nearly-invariant}

In this section we prove Theorem \ref{theo:almost-invariant} and Proposition
\ref{prop:atomless}.

%%%%%%%%%%%%%%%%%%%%%%%%%%%%%%%%%%%%%%%%%%%%%%%%%%%%%%%%%%%%%%%%%%%%%%%%%%%%%%%%
\subsection{Construction}

Fix some positive integer $n$ and let $v_1,\ldots,v_n$ be continuous,
linearly independent eigenfunctions for 
$\mathscr{L}=D_{\rho\lambda}(\Phi_\#)$.

For all $a=(a_1,\ldots,a_n)\in B^n(0,\eta)$, define
$E(a)=\rho\lambda+\sum_i a_i v_i\in\measures$ and using Proposition
\ref{prop:embedding}, choose $\eta$ small
enough to ensure that $E$ is bi-Lipschitz. Then define
$F(a)=E(\eta a)$ on the unit ball $B^n$.

\begin{prop}
We have
\[\wassd\big(\Phi_\#(F(a)),F(a)\big) = o(|a|)\]
and, as a consequence, for all $\varepsilon>0$
and all integer $K$, there is a radius $r$ 
such that for all $k\leqslant K$
and all $a\in B^n(0,c)$ the following holds:
\[\wassd\big(\Phi_{d\#}^k(F(a)),F(a)\big)\leqslant \varepsilon |a|.\]
\end{prop}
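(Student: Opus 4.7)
The plan is to combine the G\^ateaux derivative formula of Theorem \ref{theo:expanding}, the fact that the $v_i$ are $1$-eigenfunctions of $\mathscr{L}$, and the Lipschitzness of the push-forward map. Write $w(a) := \eta \sum_i a_i v_i$, so that $F(a) = \rho\lambda + w(a)$; by linearity $\mathscr{L}w(a) = \sum_i \eta a_i\,\mathscr{L}v_i = w(a)$. The G\^ateaux derivative gives, for each fixed direction,
\[\wassd\bigl(\Phi_\#(\rho\lambda+w(a)),\, \rho\lambda+\mathscr{L}w(a)\bigr) = o(|w(a)|_{L^2(\rho\lambda)}).\]
Since $w(a)$ ranges in the finite-dimensional subspace $V := \mathrm{span}(v_1,\ldots,v_n)$, the remainder can be taken uniform over the unit sphere of $V$ (as indicated in the discussion following Theorem \ref{theo:differential}), so the bound is $o(|a|)$ uniformly in the direction $a/|a|$. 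Substituting $\mathscr{L}w(a)=w(a)$ yields the first claim
\[\wassd\bigl(\Phi_\#(F(a)),\, F(a)\bigr) = o(|a|).\]

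For the iterated bound, observe that $\Phi$ is $L$-Lipschitz with $L := \max_{\mathbb{S}^1}|\Phi'|$ (lift to $\mathbb{R}$ and pass to the quotient). Consequently $\Phi_\#$ is $L$-Lipschitz on $(\measures,\wassd)$, since pushing an optimal plan forward by $\Phi\times\Phi$ gives a transport plan between the images with cost multiplied by at most $L^2$. Setting $\mu_j := \Phi_\#^j F(a)$ and applying the triangle inequality together with this Lipschitz estimate iterated $j$ times,
\[\wassd(\mu_k,\mu_0) \leq \sum_{j=0}^{k-1} \wassd(\mu_{j+1},\mu_j) \leq \Bigl(\sum_{j=0}^{k-1} L^j\Bigr) \wassd(\mu_1,\mu_0) \leq \frac{L^K-1}{L-1}\, \wassd\bigl(\Phi_\# F(a),F(a)\bigr)\]
for every $k\leq K$. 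Since the right-hand side is $o(|a|)$ by the first part, given $\varepsilon>0$ and $K$ we choose $r>0$ so that $\wassd(\Phi_\# F(a),F(a)) \leq \varepsilon(L-1)/(L^K-1)\,|a|$ whenever $|a|\leq r$, which gives the announced inequality.

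The only substantive difficulty is the uniformity of the G\^ateaux remainder over the sphere of $V$, which is what turns a directional $o(t)$ into a genuine $o(|a|)$. This uniformity is asserted without detail after Theorem \ref{theo:differential}; to extract it rigorously one should revisit the proof of Lemma \ref{lemm:composition} (and of Proposition \ref{prop:centering}, for the general case) and observe that the constants arising from the piecewise-constant approximations depend continuously on the coefficients $a_i$, so that a compactness argument on the unit sphere of $V$ in the $C^0$ topology provides a common rate of convergence. Everything else is a straightforward assembly of the ingredients already proved in the paper.
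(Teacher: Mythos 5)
Your proof is correct and follows essentially the same route as the paper: uniformity of the G\^ateaux remainder on the finite-dimensional span of the $v_i$ gives the $o(|a|)$ bound after substituting $\mathscr{L}v_i=v_i$, and the iterated estimate comes from the $L$-Lipschitzness of $\Phi_\#$ plus the triangle inequality with $r$ chosen so the one-step error is at most $\varepsilon(L-1)/(L^K-1)|a|$. Your added remarks (the explicit verification that $\Phi_\#$ is $L$-Lipschitz, and the compactness argument behind the uniform remainder) only flesh out points the paper asserts without detail.
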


\begin{proof}
Since we have restricted ourselves to a finite-dimensional space, 
we have $\wassd\big(\Phi_\#(\rho\lambda+\eta\sum a_iv_i),
  \rho\lambda+\eta\sum a_i\mathscr{L}(v_i)\big) = o(|a|)$
and, since $\mathscr{L}(v_i)=v_i$, we get
$\wassd\big(\Phi_\#(F(a)),F(a)\big) = o(|a|)$.

The second inequality follows easily. The map $\Phi_\#$ is $L$-Lipschitz for some $L>1$
($L=d$ in the model case, $L>d$ otherwise). For all $\varepsilon>0$
and for all integer $K$, let $r>0$ be small enough to ensure that
\[|a|<\delta\Rightarrow \wassd\big(\Phi_\#(F(a)),F(a)\big) 
  \leqslant \frac{L-1}{L^{k-1}-1}\varepsilon |a|.\]
Then
\begin{eqnarray*}
\wassd\big(\Phi_{\#}^k(F(a)),F(a)\big) 
  &\leqslant& \sum_{\ell=1}^{k-1} \wassd\big(\Phi_{\#}^\ell(F(a)),
       \Phi_{\#}^{\ell-1}(F(a))\big)\\
  &\leqslant& \sum_{\ell=1}^{k-1} L^{\ell-1} \wassd\big(\Phi_{d\#}(F(a)),F(a)\big)\\
  &\leqslant& \varepsilon |a|.
\end{eqnarray*}
\end{proof}

This ends the proof of Theorem \ref{theo:almost-invariant}. It would be 
interesting to have explicit control on $r$ in terms of $\varepsilon$,
$n$ and $K$, and in particular to replace the $o(|a|)$
by a $O(|a|^\alpha)$ for some $\alpha>1$. This seems uneasy because,
even in the model
case where $v_i$ are explicit, we can approximate them by $C^\infty$
vector fields $w_i$ with a good control on $(-w_i')^{-1}$ and
$w'$, but only bad bounds on $w''$ (and therefore on the modulus of continuity
of $w'$).

%%%%%%%%%%%%%%%%%%%%%%%%%%%%%%%%%%%%%%%%%%%%%%%%%%%%%%%%%%%%%%%%%%%%%%%%%%%%%%%%
\subsection{Regularity}

Let us prove that given $\mu$ an atomless measure and
$v\in L^2_0(\mu)$ (or, indifferently, $v\in L^2(\mu)$), for all but
countably many values of the parameter $t$, the measure $\mu+tv$
has no atom.

\begin{proof}[Proof of Proposition \ref{prop:atomless}]
By a line in $T\mathbb{S}^1\simeq \mathbb{S}^1\times \mathbb{R}$,
we mean the image of a non-horizontal line of $\mathbb{R}^2$ by
the quotient map $(x,y)\mapsto (x \mod 1,y)$. We sometimes
refer to a line by an equation of one of its lifts in $\mathbb{R}^2$.

The measure $\mu+tv$ has an atom at $s$ if and only if
the measure $\Gamma=(\Id,v)_\#\mu$ defined on $T\mathbb{S}^1$
gives a positive mass to the line $(x+ty=s)$. Since
$\mu$ has no atom, neither does $\Gamma$, and since two lines intersect in a
countable set, the intersection of two lines is  $\Gamma$-negligible.
It follows that there can be at most $n$ different lines that are given
a mass at least $1/n$ by $\Gamma$. In particular, at most countably many lines
are given a positive mass by $\Gamma$, and the result follows.
\end{proof}

For a general $L^2$ vector field, we cannot hope for more.
The following folklore example shows a $L^2_0$ function such that
$\lambda +tv$ is stranger to $\lambda$ for almost all $t$.
\begin{exem}
Let $K$ be a four-corner Cantor set of $\mathbb{R}^2$.
More precisely, $A,B,C,D$ are the vertices of a square,
$S_A,S_B,S_C,S_D$ are the homotheties of coefficient $1/4$ centered
at these points, and $K$ is the unique fixed point of the map
defined on compact sets $M\subset\mathbb{R}^2$ by
\[\mathscr{S}(M)=S_A(M)\cup S_B(M)\cup S_C(M)\cup S_D(M).\]
The Cantor set $K$ projects on a well-chosen line to an interval,
see figure \ref{fig:Cantor}, while in almost all directions
it projects to $\lambda$-negligible sets, see e.g. 
\cite{Peres-Simon-Solomyak} for a proof.
Choose the square so that $K$ projects vertically to $[0,1]$ (identified
to $\mathbb{S}^1$), and for $x\in[0,1]$ define $v(x)$ as the least $y$
such that $(x,y)\in K$. Then $v$ is $L^2$ and, up to a vertical translation,
we can even assume that $v\in L^2_0$. But for almost all $t$,
the measure $\lambda+tv$ is concentrated into a negligible set.
\end{exem}

\begin{figure}\begin{center}
\includegraphics[scale=.5]{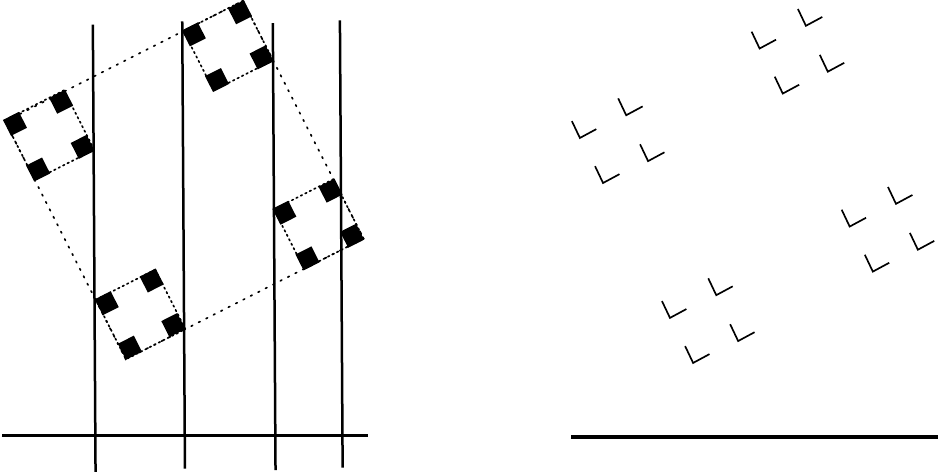}
\caption{A square Cantor set that projects vertically to a segment,
  but projects in almost all directions to negligible sets.
  On the right, an approximation of the graph of the function $v$.}
  \label{fig:Cantor}
\end{center}\end{figure}

%%%%%%%%%%%%%%%%%%%%%%%%%%%%%%%%%%%%%%%%%%%%%%%%%%%%%%%%%%%%%%%%%%%%%%%%%%%%%%%%
%%%%%%%%%%%%%%%%%%%%%%%%%%%%%%%%%%%%%%%%%%%%%%%%%%%%%%%%%%%%%%%%%%%%%%%%%%%%%%%%
\section{The infinitesimal version of Furstenberg's theorem}

In this section we prove Theorem \ref{theo:main}
and Corollary \ref{coro:main}.

\begin{proof}[Proof of Theorem \ref{theo:main}]
We only need to look closely at the expressions of 
$\mathcal{L}_d:=D_\lambda \Phi_{d\#}$.
As indicated in the proof of Proposition \ref{prop:spec},
setting $c_k(x)=\cos(2\pi k x)$ and $s_k(x)=\sin(2\pi k x)$, we
have
\[\mathcal{L}_d(c_k)=\mathcal{L}_d(s_k)=0\]
when $k$ is not a multiple of $d$, and 
\[\mathcal{L}_d(c_{md})=d c_m,\quad \mathcal{L}_d(s_{md})=d s_m\]
when
$m$ is a positive integer. Note that the union of $(c_k)_{k\ge 1}$ and $(s_k)_{k\ge 1}$
is a Hilbert basis of $T_\lambda \mathcal{P}(\mathbb{S}^1)$.

It follows that the $1$-eigenspace $E_d$ of $\mathcal{L}_d$ is generated by the functions
\[\sum_{m\ge0} d^{-m} c_{d^m j} \quad \mbox{and} \quad \sum_{m\ge0} d^{-m} s_{d^m j}\] 
where $j$ runs over positive integers not multiple of $d$.

Taking intersections, we first see that $E_2\cap E_3$ is generated by the functions
\[\sum_{n,m\ge 1} 2^{-n}3^{-m} c_{2^n3^m j} \quad \mbox{and} \quad
  \sum_{n,m\ge 1} 2^{-n}3^{-m} s_{2^n3^m j} \]
where $j$ is any positive integer prime with $6$. In particular, 
$E_2\cap E_3$ is infinite-dimensional.

We also get that $\cap_{d\ge 2} E_d$ is generated by
\[ \sum_{p\ge 1} p^{-1} c_p \quad \mbox{and} \quad  \sum_{p\ge 1} p^{-1} s_p \]
and is thus $2$-dimensional. Note that these functions are indeed 
in $L^2(\lambda)$,
and therefore represent tangent vectors at $\lambda$.
\end{proof}

The proof of the corollary is also easy; it relies mostly on the following
pointwise version of the continuity equation. We do not claim any novelty in
the Lemma below, but still provide a simple proof of the simple case we need.
\begin{lemm}
Assume that $(\mu_t)_t$ is a curve of probability measures on $\mathbb{S}^1$
which is differentiable at $0$ with tangent vector 
$v\in T_{\mu_0} \mathcal{P}(\mathbb{S}^1)$, in the sense that
\[\wassd_2(\mu_t, \mu_0+tv) = o(t)\]
(be reminded that $\mu_0+tv:=(\Id+tv)_\#\mu_0$ is the image of
$tv$ by the exponential map at $\mu_0$).

Then for all smooth function $\psi$, we have
\[\frac{\dd}{\dd t} \int \psi \,\dd\mu_t \Big|_{t=0}= \int \psi' v \,\dd\mu_0.\]
\end{lemm}
Note that this lemma is mostly relevant in the case when $\mu_0$ is
regular in the sense of Gigli (i.e., in dimension $1$, atomless) since then all
``tangent vectors'' at $\mu_0$ are indeed represented by a vector field
$v\in L^2(\mu_0)$.
In a more general manifold, the same result would hold with $\psi$
a compactly supported function, and $\nabla \psi$ instead of $\psi'$.

\begin{proof}
First, we observe that denoting by $\Pi_t$ an optimal transport plan
from $\mu_t$ to $\mu_0+tv$ we have
\begin{align*}
\big|\smallint \psi \,\dd\mu_t -\smallint \psi \,\dd(\mu_0+tv)\big|
  &= \big| \smallint\psi(x) \,\dd\Pi_t(x,y) - \smallint \psi(y) \,\dd\Pi_t(x,y) \big| \\
  &\le \smallint |\psi(x)-\psi(y)| \,\dd\Pi(x,y) \\
  &\le \sup(\psi') \smallint d(x,y) \,\dd\Pi(x,y) \\
  &\le \sup(\psi') \big(\smallint d(x,y)^2 \,\dd\Pi(x,y) \big)^{\frac12}
    \big(\smallint 1 \,\dd\Pi(x,y) \big)^{\frac12} \\
  &= \sup(\psi') \wassd_2(\mu_t,\mu_0+tv)\\
  &= o(t)
\end{align*}
so that we can use $\mu_0+tv$ to estimate the derivative of the integral of $\psi$.

Next, we have
\begin{align*}
\int \psi \,\dd(\mu_0+tv) - \int \psi\,\dd\mu_0 
  &= \int \big(\psi\circ(\Id+tv) -\psi\big) \,\dd\mu_0 \\
  &= \int \big(\psi(x+tv(x))-\psi(x)\big) \,\dd\mu_0 \\
  &= \int \big(t\psi'(x)v(x) + \frac12\psi''(c_x)t^2 v(x)^2 \Big) \,\dd\mu_0 \\ 
  &= t \int \psi' v\,\dd\mu_0 + O(t^2).
\end{align*}
Note that the finiteness of $\lVert v\rVert_{L^2(\mu_0)}$ is part of the definition
of the tangent space at $\mu_0$.

Now the claimed equality follows readily from these two estimates.
\end{proof}

\begin{proof}[Proof of Corollary \ref{coro:main}]
Let $v\in \cap_{d\ge2} E_d$ be a non-zero tangent vector at $\lambda$
invariant under all the $D_\lambda\Phi_{d\#}$, and define
$\mu_t=\lambda+tv$. 

By definition of the tangent space at $\lambda$,
$\int v \,\dd\lambda=0$ so that $v$ has a well-defined antiderivative.
Let $\psi_0$ be a smooth approximation of one of its anti-derivatives,
so that $\int\psi'_0v \,\dd\lambda \simeq \int v^2 \,\dd\lambda$
is non-zero.

Then the pointwise continuity equation implies that
\[\frac{\dd}{\dd t} \int \psi_0 \,\dd\mu_t \Big|_{t=0}
  = \int \psi_0' v \,\dd\lambda \neq 0.\]
Moreover, the invariance of $v$ means that the curve  $(\Phi_{d\#}\mu_t)$
is also differentiable at $t=0$, with tangent vector $v$. In consequence, we get
for all smooth function $\psi$:
\[ \frac{\dd}{\dd t} \int \psi \,\dd(\Phi_{d\#}\mu_t) \Big|_{t=0}
  = \int \psi' v \,\dd\lambda = \frac{\dd}{\dd t} \int \psi \,\dd\mu_t \Big|_{t=0}.\]
  
The weak continuity of $(\mu_t)$ is obvious, and the fact
that $\mu_t$ is atomless for almost all $t$ is Proposition
\ref{prop:atomless}.
\end{proof}

%%%%%%%%%%%%%
\subsection*{Acknowledgements} I am indebted to Artur Oscar Lopes for his
numerous questions and comments on the various versions of this paper,
and it is a pleasure to thank him.

I also wish to thank Fr\'ed\'eric Faure, 
\'Etienne Ghys, Nicola Gigli, Antoine Gournay, Nicolas Juillet
and Herv\'e Pajot for 
interesting discussions and their comments on earlier versions of this paper,
and the anonymous referees for their corrections and the
constructive criticism of one of them.

\bibliographystyle{smfalpha}
\bibliography{biblio.bib}

%%%%%%%%%%%%%%%%%%%%%%%%%%%%%%%%%%%%%%%%%%%%%%%%%%%%%%%%%%%%%%%%%%%%%%%%%%%%%%%%
%%%%%%%%%%%%%%%%%%%%%%%%%%%%%%%%%%%%%%%%%%%%%%%%%%%%%%%%%%%%%%%%%%%%%%%%%%%%%%%%
%%%%%%%%%%%%%%%%%%%%%%%%%%%%%%%%%%%%%%%%%%%%%%%%%%%%%%%%%%%%%%%%%%%%%%%%%%%%%%%%
\end{document}